\theoremstyle{plain}
\newtheorem{theorem}{Theorem}[section]
\newtheorem{lemma}[theorem]{Lemma}
\newtheorem{proposition}[theorem]{Proposition}
\newtheorem{corollary}[theorem]{Corollary}
\theoremstyle{definition}
\theoremstyle{remark}
\newtheorem{remark}[theorem]{Remark}
\newtheorem{notations}[theorem]{Notations}
\newcommand{\Ps}{\mathbb{P}}
\def\bin #1#2 {\left( \matrix { #1 \cr #2 \cr } \right) }
\begin{document}

\title[Refining Castelnuovo-Halphen  bounds]
{Refining  Castelnuovo-Halphen  bounds}

\author{Vincenzo Di Gennaro }
\address{Universit\`a di Roma \lq\lq Tor Vergata\rq\rq, Dipartimento di Matematica,
Via della Ricerca Scientifica, 00133 Roma, Italy.}
\email{digennar@axp.mat.uniroma2.it}

\author{Davide Franco }
\address{Universit\`a di Napoli
\lq\lq Federico II\rq\rq, Dipartimento di Matematica e
Applicazioni \lq\lq R. Caccioppoli\rq\rq, P.le Tecchio 80, 80125
Napoli, Italy.} \email{davide.franco@unina.it}

\bigskip
\abstract Fix integers $r,d,s,\pi$ with $r\geq 4$, $d\gg s$,
$r-1\leq s \leq 2r-4$, and $\pi\geq 0$. Refining classical results
for the genus of a projective curve, we exhibit a sharp upper
bound for the arithmetic genus $p_a(C)$ of an integral projective
curve $C\subset {\mathbb{P}^r}$ of degree $d$,  assuming that $C$
is not contained in any surface of degree $<s$, and not contained
in any surface of degree $s$ with sectional genus $> \pi$. Next we
discuss other types of bound for $p_a(C)$, involving conditions on
the entire Hilbert polynomial of the integral surfaces on which
$C$ may lie.

\medskip\noindent {\it{Keywords and phrases}}: Castelnuovo-Halphen
Theory, Hartshorne-Rao module, Hilbert polynomial, arithmetic
genus.

\medskip\noindent {\it{MSC2010}}\,: Primary 14N15; Secondary 14H45, 14H99, 14M05, 14N05.

\endabstract

\maketitle

\section{Introduction}

A classical problem in the theory of projective curves is the
classification of all their possible genera in terms of the degree
$d$ and the dimension $r$ of the space where they are embedded. In
1882  Halphen \cite{Halphen} and Noether \cite{Noether} determined
an upper bound $G(3,d)$ for the genus of an irreducible, non
degenerate curve in $\Ps^3$, and in 1889 Castelnuovo
\cite{Castelnuovo} found the analogous bound $G(r,d)$ for the
genus of irreducible, non degenerate curves in $\Ps^r$, $r\geq 3$.

Since curves of maximal genus $G(3,d)$ in $\Ps^3$ must lie on a
quadric surface, it is natural to ask for the maximal genus
$G(3,d,s)$ of space curves of degree $d$, not contained in
surfaces of degree less than a fixed integer $s$. In fact Halphen
gave such a refined bound. His argument was not complete, but in
1977 Gruson and Peskine \cite{GP} provided a complete proof in the
range $d>s^2-s$.

The same phenomenon occurs for curves of maximal genus $G(r,d)$ in
$\Ps^r$, also called {\it Castelnuovo's curves}: at least when
$d>2r$, they must lie on surfaces of minimal degree $r-1$. As
before, one may refine Castelnuovo's bound, looking for the
maximal genus $G(r,d,s)$ of curves of degree $d$ in $\Ps^r$, not
contained in surfaces of degree less than a fixed integer $s$. In
1982 Eisenbud and Harris (\cite{EH}, Theorem (3.22), p. 117)
determined such a bound for $r-1\leq s \leq 2r-2$ and $d\gg s$.
Next, in 1993, the bound $G(r,d,s)$ has been computed for any $s$
and   $d\gg s$ (see \cite{CCD}).

A very special feature of the curves of maximal genus $G(r,d,s)$,
which generalizes what we said about Castelnuovo's curves (i.e.
when $s=r-1$), is that they must lie on {\it Castelnuovo's
surfaces} of degree $s$, i.e. on surfaces whose general hyperplane
sections are themselves curves of maximal genus $G(s,r-1)$ in
$\Ps^{r-1}$ (see \cite{CCD}).

Therefore, pushing further previous analysis, one may ask for {\it
the maximal genus $G(r,d,s,\pi)$ of curves of degree $d$, not
contained in surfaces of degree $<s$, {\it neither} in surfaces of
degree $s$ with sectional genus greater than a fixed integer
$\pi$} (e.g. $\pi=G(r-1,s)-1$). Of course, one may assume  $0\leq
\pi\leq G(r-1,s)$, and for $\pi=G(r-1,s)$ and $d\gg s$ we have
$G(r,d,s,\pi)=G(r,d,s)$.

In the present paper we compute $G(r,d,s,\pi)$, in the range
$r-1\leq s\leq 2r-4$ and $d\gg s$ (Theorem \ref{first})(except for
the cases $s=2r-3$ and $s= 2r-2$, it is the quoted Eisenbud-Harris
range for $s$ \cite{EH}). Next we discuss other types of bound for
$p_a(C)$, involving conditions on the entire Hilbert polynomial of
the integral surfaces on which $C$ may lie (Proposition
\ref{second}).

\bigskip
\section{Notations and the statement of the main results}
In order to state our results we need some preliminary notation,
which we will use throughout the paper.

\begin{notations}\label{notazioni}
(i) Fix integers $r$, $d$, $s$, $\pi$ and $p$, with $r\geq 3$ and
$s\geq r-1$. Define $m$ and $\epsilon$ by dividing
$d-1=ms+\epsilon$, $0\leq \epsilon \leq s-1$. Set $
\pi_0:=\pi_0(s,r-1):=s-r+1$. Notice that when $r-1\leq s\leq 2r-3$
then $\pi_0=G(r-1,s)$, i.e. $\pi_0$ is the Castelnuovo's bound for
a curve of degree $s$ in $\Ps^{r-1}$ \cite{EH}. Set
$$
d_0(r):=
\begin{cases}
16(r-2)(2r-3)\quad
{\text{if $4\leq r\leq 6$}}\\
8(r-2)^3\quad {\text{if $7\leq r\leq 11$}}\\
2^{r+1}\quad {\text{if $r\geq 12$}}.
\end{cases}
$$

\smallskip
(ii) When $r-1\leq s\leq 2r-4$ define:
$$
G^*(r,d,s,\pi,p):={\binom{m}{2}}s+m(\epsilon+\pi)-p
+\,\max\left(0,\left[\frac{2\pi-(s-1-\epsilon)}{2}\right]\right)
$$
(square brackets indicate the integer part). Even if
$G^*(r,d,s,\pi,p)$ does not depend on $r$, we prefer to use this
notation in order to recall that $r-1\leq s\leq 2r-4$. Observe
that the number $G^*(r,d,s,\pi_0,0)$ is the quoted bound
$G(r,d,s)$ determined in \cite{EH}, Theorem (3.22), when $r-1\leq
s\leq 2r-3$ (in \cite{EH} these numbers are denoted by
$\pi_{\alpha}(d,r)$, with $\alpha:=s-r+2$).

\smallskip
(iii) We define the  numerical function $h_{r,d,s,\pi}$ as
follows:
$$
h_{r,d,s,\pi}(i):=
\begin{cases}
1-\pi+is-\max\left(0,\,\pi_0-\pi-i+1\right)\quad
{\text{if $1\leq i\leq m$}}\\
d-\max\left(0,\left[\frac{2\pi-(s-1-\epsilon)}{2}\right]\right)\quad
{\text{if $i=m+1$}}\\ d\quad {\text{if $i\geq m+2$}}.
\end{cases}
$$

\smallskip
(iv) For a projective subscheme $X\subseteq\mathbb{P}^N$ we will
denote by $\mathcal{I}_X$ its ideal sheaf in $\mathbb{P}^N$, and
by $M(X):=\oplus_{i\in\mathbb{Z}} H^1(\mathbb{P}^{N},\mathcal
I_{X}(i))$ the Hartshorne-Rao Module. We will denote by $h_X$ the
Hilbert function of $X$ \cite{EH}, and by $\Delta h_X$ the first
difference of $h_X$, i.e. $\Delta h_X(i):=h_X(i)-h_X(i-1)$. We
will say that $X$ is a.C.M. if it is arithmetically
Cohen-Macaulay.

\smallskip
(v) Given numerical functions $h_1:\mathbb{Z}\to\mathbb{Z}$ and
$h_2:\mathbb{Z}\to\mathbb{Z}$, we say that $h_1>h_2$ if
$h_1(i)\geq h_2(i)$ for any $i\in\mathbb{Z}$, and if there exists
some $i$ such that $h_1(i)>h_2(i)$.

\end{notations}

\medskip
Our main result is the following:

\medskip
\begin{theorem} \label{first} Fix integers $r,d,s,\pi$ with $r\geq 4$,
$r-1\leq s \leq 2r-4$, $0\leq\pi\leq \pi_0$ and $d> d_0(r)$. Let
$C\subset {\Ps^r}$ be an irreducible, reduced, nondegenerate,
projective curve of degree $d$, and arithmetic genus $p_a(C)$. Let
$\Gamma\subset\mathbb{P}^{r-1}$ be the general hyperplane section
of $C$, and $h_{\Gamma}$ its Hilbert function. Assume that $C$ is
not contained in any surface of degree $<s$, and not contained in
any surface of degree $s$ with sectional genus $>\pi$. Then one
has:

\smallskip (a) $\,\,h_{\Gamma}(i)\geq h_{r,d,s,\pi}(i)$ for any
$i\in\mathbb{Z}$;

\smallskip (b)
$\,\,p_a(C)\leq
G^*\left(r,d,s,\pi,-\binom{\pi_0-\pi+1}{2}\right)$, and therefore
$$G(r,d,s,\pi)\leq
G^*\left(r,d,s,\pi,-\binom{\pi_0-\pi+1}{2}\right);$$

\smallskip (c) $\,$the bound is sharp, and the curves with maximal genus are
a. C.M. with $h_{\Gamma}= h_{r,d,s,\pi}$, and contained in
surfaces $S$ of degree $s$, sectional genus $\pi$ and arithmetic
genus $p_a(S)=-\binom{\pi_0-\pi+1}{2}$.
\end{theorem}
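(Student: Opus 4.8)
The plan is to follow the classical Castelnuovo–Halphen strategy, adapted to track the extra hypothesis on the sectional genus of surfaces of degree $s$. The heart of the matter is part (a), a lower bound for the Hilbert function $h_\Gamma$ of the general hyperplane section $\Gamma\subset\mathbb{P}^{r-1}$ of $C$; parts (b) and (c) will follow from (a) by the usual numerical machinery. First I would reduce, via the uniform position principle applied to $\Gamma$, to a statement about the Hilbert function of a general hyperplane section $\Lambda$ of $\Gamma$, i.e.\ a set of $d$ points in linear general position in $\mathbb{P}^{r-2}$. The key input is that, because $C$ lies on no surface of degree $<s$, the points of $\Gamma$ impose at least $\min(d, is+1-\text{(correction)})$ conditions on hypersurfaces of each degree $i$; the nontrivial refinement here is the term $\max(0,\pi_0-\pi-i+1)$, which has to be extracted from the hypothesis that $C$ lies on no degree-$s$ surface of sectional genus $>\pi$. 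Concretely, if $\Delta h_\Gamma(i)$ were too small for some small $i$, then $\Gamma$ — hence $C$ — would be forced to lie on a surface of degree $s$ whose hyperplane section has too large a genus, contradicting the hypothesis; translating "too small" into the exact function $h_{r,d,s,\pi}$ is where the combinatorial bookkeeping lives.

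Second, once (a) is established, I would integrate. Since $p_a(C)=\sum_{i\ge 1}\bigl(d-h_\Gamma(i)\bigr)$ (the arithmetic genus as the sum over $i$ of the "failure" of $\Gamma$ to impose independent conditions), the inequality $h_\Gamma\ge h_{r,d,s,\pi}$ gives
$$
p_a(C)\le \sum_{i\ge 1}\bigl(d-h_{r,d,s,\pi}(i)\bigr),
$$
and a direct summation of the piecewise-defined $h_{r,d,s,\pi}$ should reproduce exactly $G^*\bigl(r,d,s,\pi,-\binom{\pi_0-\pi+1}{2}\bigr)$: the contributions for $1\le i\le m$ yield the $\binom{m}{2}s+m(\epsilon+\pi)$ part together with the $-\binom{\pi_0-\pi+1}{2}$ term coming from the sum of the $\max(0,\pi_0-\pi-i+1)$ corrections, while $i=m+1$ contributes the final $\max(0,[(2\pi-(s-1-\epsilon))/2])$ term. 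This is routine but must be done carefully to match the hypothesis $d>d_0(r)$, which guarantees $m$ is large enough that the asymptotic regime dominates and the "boundary" terms behave as claimed.

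Third, for sharpness (c), I would exhibit extremal curves: take a Castelnuovo-type surface $S\subset\mathbb{P}^r$ of degree $s$ with $\pi(S)=\pi$ and $p_a(S)=-\binom{\pi_0-\pi+1}{2}$ — such surfaces exist in the range $r-1\le s\le 2r-4$ (e.g.\ suitable rational or conic-bundle surfaces, or cones, whose general hyperplane section is a curve of degree $s$ and genus $\pi$ in $\mathbb{P}^{r-1}$) — and on it take a curve $C$ in an appropriate linear system so that $h_\Gamma=h_{r,d,s,\pi}$ identically. One then checks that for this $C$ the bound in (b) is attained, that $C$ is a.C.M.\ (equivalently $\Delta h_C=h_\Gamma$, i.e.\ $H^1(\mathcal{I}_C(i))=0$ for all $i$), and that $C$ lies on no surface of degree $<s$ and on no degree-$s$ surface of sectional genus $>\pi$ — the latter typically because any such surface would have to contain $C$ and hence agree with $S$ in degree $s$. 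A standard Hilbert-function/liaison argument then shows that \emph{every} extremal curve arises this way, giving the structural statement in (c).

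The main obstacle is part (a) — specifically, isolating the correction term $\max(0,\pi_0-\pi-i+1)$ for small $i$. The plain Castelnuovo–Halphen argument only uses "not on a surface of degree $<s$" and gives $\Delta h_\Gamma(i)\ge\min(d,is+1)$ up to the known Halphen refinement; squeezing out the extra drop forced by the sectional-genus hypothesis requires a more delicate analysis of the Hilbert function of $\Gamma$ in the degrees $i\le\pi_0-\pi+1$, presumably via a study of the ideal of $\Gamma$ in those degrees (its generators cut out a surface, and the genus of that surface's hyperplane section is controlled by exactly these $\Delta h$ values). I expect this to be where the bulk of the technical work, and the use of $d>d_0(r)$, is concentrated.
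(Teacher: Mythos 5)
Your outline of part (b) is correct and matches the paper's argument ($p_a(C)\le\sum_{i\ge1}(d-h_\Gamma(i))\le\sum_{i\ge1}(d-h_{r,d,s,\pi}(i))=G^*$). The gaps are in (a) and (c). For (a), you propose to extract the correction term $\max(0,\pi_0-\pi-i+1)$ by contradiction: if $\Delta h_\Gamma(i)$ were too small, $C$ would be forced onto a degree-$s$ surface of sectional genus $>\pi$. You never carry this out, and it is doubtful it can be: such an argument would require a new lifting theorem tying the Hilbert function of $\Gamma$ to the \emph{sectional genus} (not merely the degree) of surfaces through $C$, which is not available. The paper avoids any new lifting statement by a dichotomy: if $C$ lies on no surface of degree $s$ at all, the known Eisenbud--Harris bound for degree $s+1$ already dominates $h_{r,d,s,\pi}$ by a purely numerical comparison (Lemma \ref{num}); if $C$ does lie on a degree-$s$ surface $S$, the hypothesis forces its general hyperplane section $\Sigma$ to have genus $\pi'\le\pi$, Bezout gives $h_\Gamma(i)=h_\Sigma(i)$ for $i\le m$, and the Green--Lazarsfeld theorem (the ideal of the general hyperplane section of $\Sigma$ is generated by quadrics when $r-1\le s\le 2r-4$) yields $h^1(\mathbb{P}^{r-1},\mathcal I_\Sigma(i))\le\max(0,\pi_0-\pi+1-i)$, which is exactly the correction term; Clifford's theorem handles $i\ge m+1$. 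This is also where the hypothesis $s\le 2r-4$ enters, which your sketch never uses.

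For (c), both halves are asserted rather than proved. On existence, ``such surfaces exist'' is precisely the delicate point: one needs $\Sigma$ with $h^1(\mathcal I_\Sigma(i))=\max(0,\pi_0-\pi+1-i)$ for \emph{all} $i$, i.e.\ a maximal Hartshorne--Rao module, which the paper obtains by projecting a smooth Castelnuovo curve from $\mathbb{P}^{r-1+\pi_0-\pi}$ through a center chosen so that the image acquires a $(\pi_0-\pi+2)$-secant line; the extremal curve is then built by liaison on the cone over $\Sigma$, and its genus is computed via adjunction on the blown-up cone together with the delta invariant at the vertex --- none of this is ``an appropriate linear system'' on a ready-made surface. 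On the structural claim that extremal curves lie on surfaces of \emph{minimal} arithmetic genus $-\binom{\pi_0-\pi+1}{2}$, the inequality from (a)--(b) is not enough: one needs the identity $p_a(C)=\binom{m}{2}s+m(\epsilon+\pi)-p_a(S)+\sum_{i\ge m+1}(d-\Delta h_C(i))$ of Proposition \ref{prop} (via Corollary \ref{essesmall}), which forces $p_a(S)\le-\binom{\pi_0-\pi+1}{2}$ for an extremal $C$, and then Corollary \ref{arithmeticgenus2} gives equality. This ingredient --- the one the authors single out as the novel point --- is absent from your sketch, and the concluding ``standard Hilbert-function/liaison argument'' does not supply it.
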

\medskip

By property (c), combined with Corollary \ref{arithmeticgenus2}
below, we see that curves with {\it maximal} arithmetic genus lie
in surfaces with {\it minimal} arithmetic genus. The proof of this
fact relies on a general bound (see Proposition \ref{prop} below)
which, as far as we know, although elementary,  seems to have
escaped explicit notice. We hope that Proposition \ref{prop} can
be useful to obtain further information in the range $s\geq 2r-3$.

As for the other properties, the proof of Theorem \ref{first}
follows a now classic pattern in Castelnuovo-Halphen Theory (see
\cite{EH}), taking into account \cite{GL} which allows us to
estimate the  Hartshorne-Rao module of the general hyperplane
section of an integral surface $S\subset\mathbb{P}^r$ of degree
$r-1\leq s\leq 2r-4$ (compare also with \cite{Park}).

According to the above, previous Theorem \ref{first} suggests a
more refined analysis: {\it given integers $r,d,s,\pi,p$, find the
maximal genus $G(r,d,s,\pi,p)$ for an integral curve in
$\mathbb{P}^r$ of given degree $d\gg s$, not contained in any
surface of degree $<s$, and not contained in any surface of degree
$s$ with sectional genus $>\pi$ and arithmetic genus $<p$}. By
Theorem \ref{first} we already know that when $r-1\leq s \leq
2r-4$ then
$G\left(r,d,s,\pi,-\binom{\pi_0-\pi+1}{2}\right)=G^*\left(r,d,s,\pi,-\binom{\pi_0-\pi+1}{2}\right)$.
To this purpose  we are able to prove the following partial
result.

\medskip
\begin{proposition} \label{second} Fix integers $r,d,s,\pi,p$ with $r\geq 4$,
$r-1\leq s \leq 2r-4$, $0\leq\pi\leq \pi_0$,
$-\binom{\pi_0-\pi+1}{2}\leq p\leq 0$, and $d>d_0(r)$. Let
$C\subset {\Ps^r}$ be an irreducible, reduced, nondegenerate,
projective curve of degree $d$, and arithmetic genus $p_a(C)$.
Assume that $C$ is not contained in any surface of degree $<s$,
and not contained in any surface of degree $s$ with sectional
genus $>\pi$ and with arithmetic genus $<p$. Then one has:

\medskip (a)
$p_a(C)\leq G^*\left(r,d,s,\pi,p\right)$, i.e.
$G\left(r,d,s,\pi,p\right)\leq G^*\left(r,d,s,\pi,p\right)$;

\medskip (b) if the bound is sharp, i.e. if
$G\left(r,d,s,\pi,p\right)= G^*\left(r,d,s,\pi,p\right)$, then the
curves with maximal genus are contained in surfaces of degree $s$,
sectional genus $\pi$ and arithmetic genus $p$;

\medskip (c) if  there is a nondegenerate, irreducible, smooth
curve $\Sigma\subset \mathbb{P}^{r-1}$ of degree $s$ and genus
$\pi$ with the Hartshorne-Rao module  of dimension $-p$ (in this
case one has a fortiori $p\leq -(\pi_0-\pi)$), then the bound is
sharp, and there are extremal a.C.M. curves on the cone
$S\subset\mathbb{P}^r$ over $\Sigma$ (when $2\pi\geq s-1+\epsilon$
we must also assume that $\Sigma$ is an isomorphic projection of a
Castelnuovo curve $\Sigma'\subset \mathbb{P}^{r-1+\pi_0-\pi}$
contained in a smooth rational normal scroll surface);

\medskip (d) when  $p=-\binom{\pi_0-\pi+1}{2}$ or
$p=-(\pi_0-\pi)$ or $p=0$, then bound is sharp.
\end{proposition}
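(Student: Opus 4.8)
The plan is to run the case analysis underlying Theorem~\ref{first}, feeding the geometry of a surface containing $C$ into Proposition~\ref{prop} instead of into a direct Hilbert-function estimate. For part~(a): if $C$ lies on no surface of degree $\le s$, then it lies on none of degree $<s+1$, so (for $d>d_0(r)$, by the lifting theorem) its general hyperplane section $\Gamma\subset\mathbb{P}^{r-1}$ lies on no curve of degree $\le s$, whence $p_a(C)\le\sum_i(d-h_\Gamma(i))$ is bounded by the classical Castelnuovo--Halphen bound relative to degree $s+1$; for $d>d_0(r)$ this is strictly below $\binom{m}{2}s+m\epsilon=G^*(r,d,s,0,0)\le G^*(r,d,s,\pi,p)$, since the leading terms are $\frac{d^2}{2(s+1)}<\frac{d^2}{2s}$. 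Otherwise $C$ lies on an integral surface $S$ of degree $s$; by hypothesis its sectional genus $g_S$ satisfies $g_S\le\pi$ and its arithmetic genus satisfies $p_a(S)\ge p$. Applying Proposition~\ref{prop} to $C\subset S$ gives $p_a(C)\le G^*(r,d,s,g_S,p_a(S))$; since $G^*(r,d,s,g,q)=\binom{m}{2}s+m(\epsilon+g)-q+\max(0,[\frac{2g-(s-1-\epsilon)}{2}])$ is non-decreasing in $g$ and non-increasing in $q$, this is $\le G^*(r,d,s,\pi,p)$, proving~(a). For part~(b): an extremal curve cannot be of the first type (whose bound is strictly smaller), so it lies on such an $S$ and equality must hold in both monotonicity steps; since increasing $g$ by $1$ raises $G^*$ by at least $m\gg0$, this forces $g_S=\pi$, and then $-p_a(S)\le-p$ forces $p_a(S)=p$.

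For part~(c), let $\Sigma\subset\mathbb{P}^{r-1}$ be a smooth, nondegenerate curve of degree $s$ and genus $\pi$ with $\dim M(\Sigma)=-p$. From $\dim M(\Sigma)\ge h^1(\mathcal{I}_\Sigma(1))=h^0(\mathcal{O}_\Sigma(1))-r\ge(s+1-\pi)-r=\pi_0-\pi$ (Riemann--Roch and nondegeneracy) we get the asserted a fortiori bound $p\le-(\pi_0-\pi)$. Let $S\subset\mathbb{P}^r$ be the cone over $\Sigma$; its general hyperplane section is projectively equivalent to $\Sigma$, so $\deg S=s$ and $S$ has sectional genus $\pi$. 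Since the homogeneous ideal of a cone is the (automatically saturated) extension of that of its base, $h_S(i)=\sum_{k=0}^{i}h_\Sigma(k)$; summing $h_\Sigma(k)-(sk+1-\pi)$ over $k\ge0$ and using $h^1(\mathcal{O}_\Sigma(k))=0$ for $k\ge1$ gives $\chi(\mathcal{O}_S)=1-\dim M(\Sigma)=1+p$, i.e.\ $p_a(S)=p$.

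It remains to produce an a.C.M.\ curve $C\subset S$ of degree $d$ with $p_a(C)=G^*(r,d,s,\pi,p)$, which by~(a) is then extremal (and, being general on $S$, satisfies the non-containment hypotheses of the Proposition). Choosing $C$ with general hyperplane section a set $\Gamma$ of $d$ general points of $\Sigma$, the a.C.M.\ property gives $p_a(C)=\sum_{i\ge1}(d-h_\Gamma(i))$ with $h_\Gamma(i)=\min(h_\Sigma(i),d)$; writing $d-h_\Sigma(i)=(d-si-1+\pi)+h^1(\mathcal{I}_\Sigma(i))$ for $i\ge1$, the terms $h^1(\mathcal{I}_\Sigma(i))$ sum to $\dim M(\Sigma)=-p$, so the defect of $\Sigma$ contributes to $p_a(C)$ precisely the summand $-p$, and it only remains to make the remaining, ``visible'' part of $h_\Sigma$ optimal near $i=m,m+1$. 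For $2\pi<s-1+\epsilon$ this is the analogue on the cone of the classical Castelnuovo--Halphen construction. For $2\pi\ge s-1+\epsilon$ the extra term $[\frac{2\pi-(s-1-\epsilon)}{2}]$ cannot be realised by a curve whose hyperplane section is general on $\Sigma$, and this is exactly where the hypothesis enters: writing $\Sigma$ as an isomorphic projection of a Castelnuovo curve $\Sigma'\subset\mathbb{P}^{r-1+\pi_0-\pi}$ lying on a smooth rational normal scroll, one builds the extremal a.C.M.\ curve on the cone over $\Sigma'$ by the classical scroll construction and projects it isomorphically onto $S$. Carrying out this construction in the range $2\pi\ge s-1+\epsilon$, and checking that the projection has degree $d$ and genus exactly $G^*(r,d,s,\pi,p)$, is the main obstacle of the whole proof.

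Finally, part~(d) amounts to exhibiting the required surface for each of the three values of $p$. For $p=-\binom{\pi_0-\pi+1}{2}$ it is precisely Theorem~\ref{first}(c). For $p=-(\pi_0-\pi)$ one applies~(c) to a general isomorphic projection into $\mathbb{P}^{r-1}$ of a genus-$\pi$ Castelnuovo curve lying on a smooth rational normal scroll in $\mathbb{P}^{r-1+\pi_0-\pi}$: such a $\Sigma$ has the minimal Hartshorne--Rao dimension $\dim M(\Sigma)=\pi_0-\pi$ and automatically satisfies the scroll hypothesis of~(c). For $p=0$ with $\pi<\pi_0$ part~(c) does not apply, since any smooth nondegenerate curve of degree $s$ and genus $\pi<\pi_0$ in $\mathbb{P}^{r-1}$ has $\dim M\ge\pi_0-\pi>0$; one instead takes $S$ to be a suitable rational surface of degree $s$ and sectional genus $\pi$ (so $p_a(S)=\chi(\mathcal{O}_S)-1=0$) and constructs on it, exactly as in \cite{EH} and \cite{CCD}, an a.C.M.\ curve of degree $d$ with Hilbert function $h_{r,d,s,\pi}$; by~(a) it is extremal.
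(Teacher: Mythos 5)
Parts (a) and (b) of your proposal are correct and follow the paper's own route: you dispose of the case where $C$ lies on no degree-$s$ surface via the bound for $G(r,d,s+1)$, and otherwise feed the invariants of a degree-$s$ surface through $C$ into the genus bound (what you attribute to Proposition \ref{prop} is really Corollary \ref{essesmall}, i.e. Proposition \ref{prop} combined with Lemma \ref{ons}); your one-line monotonicity of $G^*(r,d,s,g,q)$ in $(g,q)$ replaces the paper's two-step comparison via Theorem \ref{first} and Lemma \ref{num}, which is a harmless streamlining, and your deduction in (b) is the same as the paper's. Your computation that the cone over $\Sigma$ has $p_a(S)=-\dim_{\mathbb{C}}M(\Sigma)=p$, and the a fortiori inequality $p\leq-(\pi_0-\pi)$, are also correct.

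The constructive parts, however, contain genuine gaps. In (c), for the case $2\pi\geq s-1+\epsilon$ you propose to build the extremal a.C.M. curve on the cone over $\Sigma'\subset\mathbb{P}^{r-1+\pi_0-\pi}$ and project it isomorphically onto the cone over $\Sigma$; this cannot work, because an isomorphic projection preserves the genus, so the projected curve has genus $G(r+\pi_0-\pi,d,s)=G^*(r,d,s,\pi,0)$, which falls short of the target $G^*(r,d,s,\pi,p)$ by $-p>0$, and an isomorphic projection of a linearly normal a.C.M. curve is in any case no longer a.C.M. In the paper the curve is built on the cone over $\Sigma$ itself, as the residual (in a complete intersection with a hypersurface of degree $\mu+1$, then $m+1$) of the cone over a special divisor $D$ of degree $s-1-\epsilon$; the scroll hypothesis on $\Sigma'$ enters only through \cite{CCD}, Example 6.5, to produce a divisor $D'$ on $\Sigma'$ with $h^0(\Sigma',\mathcal{O}_{\Sigma'}(D'))$ minimal, which is then transported to $D$ on $\Sigma$ so that $h_{R'}(\mu+1)$ takes the required value. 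Since you explicitly defer exactly this step (``the main obstacle''), the heart of (c) is missing. In (d), the case $p=0$ is wrong as proposed: an a.C.M. curve of degree $d\gg s$ on a surface $S$ with $p_a(S)=0$ and sectional genus $\pi<\pi_0$ cannot exist, because by Remark \ref{ms} (i) such a curve forces $M(S)=0$, whence $p_a(S)=-\dim_{\mathbb{C}}M(\Sigma)\leq-(\pi_0-\pi)<0$ by Lemma \ref{arithmeticgenus} (indeed Remark \ref{stima} (iv) records that extremal curves with $p=0$ are never a.C.M.); moreover an a.C.M. curve with $h_\Gamma=h_{r,d,s,\pi}$ would have genus $G^*\left(r,d,s,\pi,-\binom{\pi_0-\pi+1}{2}\right)$, not $G^*(r,d,s,\pi,0)$. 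The paper instead gets sharpness for $p=0$ by isomorphically projecting an extremal curve lying on the cone over a Castelnuovo curve in $\mathbb{P}^{r+\pi_0-\pi}$. Finally, in the case $p=-(\pi_0-\pi)$ you assert without justification that a general isomorphic projection $\Sigma$ of a Castelnuovo curve has $\dim_{\mathbb{C}}M(\Sigma)=\pi_0-\pi$; this requires the $2$-normality of general projections (\cite{AR}, Theorem 2.6) together with Proposition \ref{gl}, as in the paper.
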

\medskip

The line of the proof is similar to the proof of Theorem
\ref{first}. However  we are forced to slightly  modify it
because, in this more general setting, there is no a minimal
Hilbert function for the general hyperplane section of $C$ as in
Theorem \ref{first}, (a) (and in fact there are extremal curves
which are not a.C.M. (see Remark \ref{stima} below, (iii), (iv)
and (v))). We are able to overcome this difficulty thanks to the
quoted Proposition \ref{prop}. As far as we know, the question of
the existence of a curve $\Sigma$ as in (c) of Proposition
\ref{second} (essentially of a curve with a prescribed
Hartshorne-Rao module) is quite difficult. Therefore previous
proposition appears as a partial result in this setting.

We refer to Remark \ref{stima} for other examples and comments on
extremal curves in the sense of Theorem \ref{second}. Our
assumption $d>d_0(r)$ is certainly not the best possible. It is
only of the simplest form we were able to conceive. For $r\geq 12$
this assumption coincides with the one introduced in \cite{EH}, p.
117, Theorem (3.22).

\bigskip
\section{Preliminary results}

In this section we collect some preliminary results which we need
in order to prove the announced results. We start with the
following consequence of Theorem 1 in \cite{GL} (compare also with
Theorem 3.2 in \cite{Park}). We keep  the notation introduced
before.

\medskip
\begin{proposition} \label{gl}
Let $\Sigma\subset\mathbb{P}^{r-1}$ be a non degenerate integral
curve of degree $s$ with arithmetic genus $\pi$. Assume that
$r-1\leq s\leq 2r-4$. Then one has $h^1(\Sigma,\mathcal
O_{\Sigma}(i))=0$ for any $i\geq 1$,
$h^1(\mathbb{P}^{r-1},\mathcal I_{\Sigma}(1))=\pi_0-\pi$, and
$h^1(\mathbb{P}^{r-1},\mathcal I_{\Sigma}(i))\leq
\max\left(0,\,h^1(\mathbb{P}^{r-1},\mathcal
I_{\Sigma}(i-1))-1\right)$ for any $i\geq 2$. In particular
$h^1(\mathbb{P}^{r-1},\mathcal I_{\Sigma}(i))\leq
\max\left(0,\,\pi_0-\pi+1-i\right)$ for any $i\geq 1$.
\end{proposition}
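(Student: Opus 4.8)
The plan is to derive everything from Theorem 1 of \cite{GL}, which bounds the Hilbert function of the hyperplane section of $\Sigma$ in terms of its degree and the geometry of the zero-dimensional scheme it cuts out. First I would take a general hyperplane $H\subset\mathbb{P}^{r-1}$ and set $Z:=\Sigma\cap H$, a set of $s$ points in $\mathbb{P}^{r-2}$ in general position (general enough that the Uniform Position Principle applies). The key input is that, since $r-1\le s\le 2r-4$, i.e. $s\le 2(r-1)-2$, the points of $Z$ impose independent conditions in degrees $\ge 2$ except for a controlled defect in degree $1$; more precisely $\Delta h_\Sigma$ is determined by the standard ``$(1,s-r+2,1,1,\dots)$''-type behaviour, and the defect $h^1(\mathbb{P}^{r-1},\mathcal I_\Sigma(1))$ equals $\pi_0-\pi=(s-r+1)-\pi$ because $\Sigma$ is nondegenerate of degree $s$ with arithmetic genus $\pi$ (this is just Riemann--Roch: $h^0(\mathcal I_\Sigma(1))=r$ forces $h^0(\mathcal O_\Sigma(1))=s+1-r+1+h^1$, and $h^1(\mathcal O_\Sigma(1))=0$ by the range assumption, comparing with $\chi(\mathcal O_\Sigma(1))=s+1-\pi$).

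Next I would establish the vanishing $h^1(\Sigma,\mathcal O_\Sigma(i))=0$ for $i\ge1$. For $i\gg0$ this is Serre vanishing; to push it down to $i=1$ one uses that a nondegenerate integral curve of degree $s\le 2(r-1)-2$ in $\mathbb{P}^{r-1}$ has $h^1(\mathcal O_\Sigma(1))=0$ — this follows from Clifford's theorem / the classification of curves of low degree, or directly from \cite{GL}, since the hyperplane section $Z$ is $1$-normal already in the relevant degree. Then the exact sequence $0\to\mathcal O_\Sigma(i-1)\to\mathcal O_\Sigma(i)\to\mathcal O_Z(i)\to0$ and $h^1(Z,\mathcal O_Z(i))=0$ for $i\ge1$ propagate the vanishing upward.

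For the recursive estimate I would use the hyperplane-section sequence
$$
0\to H^0(\mathcal I_\Sigma(i-1))\to H^0(\mathcal I_\Sigma(i))\to H^0(\mathcal I_Z(i))\to H^1(\mathcal I_\Sigma(i-1))\to H^1(\mathcal I_\Sigma(i))\to 0,
$$
the last zero because $h^1(\mathcal I_Z(i))$ vanishes in our degrees (again from \cite{GL}: $Z$ imposes independent conditions on degree-$i$ forms for $i\ge1$ once one is past the degree-$1$ defect, which is exactly the content of Theorem 1 of \cite{GL} in the range $s\le 2(r-1)-2$). This exact sequence shows $H^1(\mathcal I_\Sigma(i))$ is a quotient of $H^1(\mathcal I_\Sigma(i-1))$, so the function $i\mapsto h^1(\mathbb{P}^{r-1},\mathcal I_\Sigma(i))$ is nonincreasing; and whenever it is still positive, the map $H^0(\mathcal I_\Sigma(i))\to H^0(\mathcal I_Z(i))$ is not surjective, which — combined with the fact that $Z$ is not contained in a hyperplane of $\mathbb{P}^{r-2}$ and the growth of the Hilbert function — forces the drop to be at least $1$. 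Hence $h^1(\mathcal I_\Sigma(i))\le\max(0,h^1(\mathcal I_\Sigma(i-1))-1)$, and iterating from $h^1(\mathcal I_\Sigma(1))=\pi_0-\pi$ gives $h^1(\mathcal I_\Sigma(i))\le\max(0,\pi_0-\pi+1-i)$.

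The main obstacle is the strict-drop statement: showing that as long as $h^1(\mathcal I_\Sigma(i-1))>0$ the quotient map to $H^1(\mathcal I_\Sigma(i))$ has nontrivial kernel, i.e. that $\mathcal I_\Sigma$ cannot ``stall'' with constant positive $h^1$. This is precisely where Theorem 1 of \cite{GL} does the real work — it controls the Hartshorne--Rao module of $\Sigma$ (equivalently the failure of $H^0(\mathcal I_\Sigma(i))\to H^0(\mathcal I_Z(i))$ to be surjective) via the Uniform Position Principle applied to $Z$, and the numerical range $r-1\le s\le 2r-4$ is exactly what makes the relevant estimate on $Z$ hold. So the proof is essentially a bookkeeping translation of \cite{GL}, Theorem 1 (see also \cite{Park}, Theorem 3.2), into the stated cohomological inequalities; no genuinely new difficulty arises beyond correctly matching the hypotheses.
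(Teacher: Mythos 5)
Your overall skeleton matches the paper's proof: nonspeciality of $\mathcal O_{\Sigma}(1)$ gives the vanishing of $h^1(\Sigma,\mathcal O_{\Sigma}(i))$ and the computation $h^1(\mathbb{P}^{r-1},\mathcal I_{\Sigma}(1))=(s+1-\pi)-r=\pi_0-\pi$; the hyperplane-section sequence together with $h^1(\mathbb{P}^{r-2},\mathcal I_{H}(i))=0$ for $i\geq 2$ (Castelnuovo theory in the range $s\leq 2(r-2)$) gives that $h^1(\mathbb{P}^{r-1},\mathcal I_{\Sigma}(i))$ is nonincreasing. Up to there your argument is fine.

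The gap is in the strict-drop step, which is the actual content of the proposition. You assert that whenever $h^1(\mathbb{P}^{r-1},\mathcal I_{\Sigma}(i-1))>0$ the restriction map $H^0(\mathcal I_{\Sigma}(i))\to H^0(\mathcal I_{H}(i))$ fails to be surjective, and justify this by ``the growth of the Hilbert function'' and by saying that Theorem 1 of \cite{GL} ``controls the Hartshorne--Rao module of $\Sigma$.'' Neither is a proof: non-surjectivity of the restriction map \emph{is} the statement that the drop is at least $1$, so invoking it is circular, and Theorem 1 of \cite{GL} is a statement about the syzygies of the finite set $H$, not about the Hartshorne--Rao module of $\Sigma$. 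The missing argument is a contradiction via propagation: the precise input from \cite{GL} is that the $s\leq 2(r-2)$ points of $H$ in general position in $\mathbb{P}^{r-2}$ have homogeneous ideal generated by quadrics. Hence, if $H^0(\mathcal I_{\Sigma}(i))\to H^0(\mathcal I_{H}(i))$ were surjective for some $i\geq 2$, the standard lifting argument (multiply lifted quadric generators by lifted linear forms) would make it surjective for every $j\geq i$, so the exact sequence would force $h^1(\mathbb{P}^{r-1},\mathcal I_{\Sigma}(j))=h^1(\mathbb{P}^{r-1},\mathcal I_{\Sigma}(i-1))>0$ for all $j\geq i-1$, contradicting the vanishing of $H^1(\mathbb{P}^{r-1},\mathcal I_{\Sigma}(j))$ for $j\gg 0$. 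Without identifying this mechanism, the inequality $h^1(\mathcal I_{\Sigma}(i))\leq\max\bigl(0,\,h^1(\mathcal I_{\Sigma}(i-1))-1\bigr)$ — and with it the whole ``$-1$ per degree'' decay that produces the $\binom{\pi_0-\pi+1}{2}$ in the main theorem — is not established.
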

\medskip

\begin{proof} Since $\pi\leq \pi_0=s-r+1$ then $2\pi-2<s$.
Therefore $\Sigma$ is non special and so $h^1(\Sigma,\mathcal
O_{\Sigma}(i))=0$ for any $i\geq 1$. In particular
$h^1(\mathbb{P}^{r-1},\mathcal I_{\Sigma}(1))=h^0(\Sigma,\mathcal
O_{\Sigma}(1))-h_{\Sigma}(1)=(1-\pi+s)-r=\pi_0-\pi$. It remains to
prove that $h^1(\mathbb{P}^{r-1},\mathcal I_{\Sigma}(i))\leq$
$\max\left(0,h^1(\mathbb{P}^{r-1},\mathcal
I_{\Sigma}(i-1))-1\right)$ for any $i\geq 2$.

To this purpose let $H$ be the general hyperplane section of
$\Sigma$. Since $r-1\leq s\leq 2r-4$ then by Castelnuovo Theory
\cite{EH} we know that $h_H(i)=s$ for any $i\geq 2$, and so
$h^1(\mathbb{P}^{r-2},\mathcal I_{H}(i))=0$ for any $i\geq 2$.
Therefore for any $i\geq 2$ we have the following exact sequence:
\begin{equation}\label{exsq}
0\to
H^0(\Ps^{r-1},\mathcal I_{\Sigma}(i-1))\to H^0(\Ps^{r-1},\mathcal
I_{\Sigma}(i))\to  H^0(\Ps^{r-2},\mathcal
I_{H}(i))\to\quad\quad\quad\quad\quad
\end{equation}
$$ \quad\quad\quad\quad\quad\quad\quad\quad\quad\quad\quad\quad\quad\quad
H^1(\Ps^{r-1},\mathcal I_{\Sigma}(i-1))\to H^1(\Ps^{r-1},\mathcal
I_{\Sigma}(i))\to 0.
$$
In particular we have $h^1(\Ps^{r-1},\mathcal I_{\Sigma}(i))\leq
h^1(\Ps^{r-1},\mathcal I_{\Sigma}(i-1))$. Now suppose by
contradiction that $h^1(\Ps^{r-1},\mathcal I_{\Sigma}(i))=
h^1(\Ps^{r-1},\mathcal I_{\Sigma}(i-1))>0$ for some $i\geq 2$.
Then the map $H^0(\Ps^{r-1},\mathcal I_{\Sigma}(i))\to
H^0(\Ps^{r-2},\mathcal I_{H}(i))$ should be surjective. But by
\cite{GL} we know that the homogeneous ideal of $H$ is generated
by quadrics. It would follow that the map $H^0(\Ps^{r-1},\mathcal
I_{\Sigma}(j))\to H^0(\Ps^{r-2},\mathcal I_{H}(j))$ is onto for
any $j\geq i$, which in turn would imply that
$h^1(\Ps^{r-1},\mathcal I_{\Sigma}(j))= h^1(\Ps^{r-1},\mathcal
I_{\Sigma}(i-1))>0$ for any $j\geq i-1$. This is absurd.
\end{proof}

\medskip
\begin{lemma} \label{num}
With the same notation as above we have:

\smallskip
(1)
$\sum_{i=1}^{+\infty}(d-h_{r,d,s,\pi}(i))=G^*\left(r,d,s,\pi,-\binom{\pi_0-\pi+1}{2}\right)$;

\smallskip
(2) if $\pi'<\pi$ then $h_{r,d,s,\pi'}> h_{r,d,s,\pi}$, therefore
$G^*\left(r,d,s,\pi',-\binom{\pi_0-\pi'+1}{2}\right)<G^*\left(r,d,s,\pi,-\binom{\pi_0-\pi+1}{2}\right)$;

\smallskip
(3) if $d\geq (2s+1)(s+1)$ then $h_{r,d,s+1,\pi_0'}>
h_{r,d,s,\pi}$, therefore $G^*\left(r,d,s+1,\pi_0',0\right)$ $
<G^*\left(r,d,s,\pi,-\binom{\pi_0-\pi+1}{2}\right)$ (here we set
$\pi_0':=\pi_0(s+1,r-1)=(s+1)-r+1$).
\end{lemma}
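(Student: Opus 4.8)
The plan is to treat all three items as purely numerical statements and verify them directly from the explicit formulas for $G^*$ and $h_{r,d,s,\pi}$, using throughout the relation $d-1=ms+\epsilon$ and the fact that in every application of the lemma $m$ is large (in particular $m\geq\pi_0$). For part (1) I would split $\sum_{i\geq 1}(d-h_{r,d,s,\pi}(i))$ according to the three branches of $h_{r,d,s,\pi}$: the terms with $i\geq m+2$ vanish, the term $i=m+1$ contributes $\max\left(0,\left[\frac{2\pi-(s-1-\epsilon)}{2}\right]\right)$, and for $1\leq i\leq m$, writing $d-1=ms+\epsilon$,
$$
d-h_{r,d,s,\pi}(i)=(m-i)s+\epsilon+\pi+\max\left(0,\pi_0-\pi-i+1\right).
$$
Summing the contribution $(m-i)s+\epsilon+\pi$ over $1\leq i\leq m$ gives $\binom{m}{2}s+m(\epsilon+\pi)$, while summing $\max\left(0,\pi_0-\pi-i+1\right)$ gives $\sum_{i=1}^{\pi_0-\pi}(\pi_0-\pi+1-i)=\binom{\pi_0-\pi+1}{2}$ (here one uses $m\geq\pi_0-\pi$). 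Adding back the $i=m+1$ term and comparing with the definition of $G^*\bigl(r,d,s,\pi,-\binom{\pi_0-\pi+1}{2}\bigr)$ gives the asserted identity.

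For part (2) I would first prove the pointwise inequality $h_{r,d,s,\pi'}(i)\geq h_{r,d,s,\pi}(i)$ for every $i$: it is an equality for $i\geq m+2$ and for $i\leq 0$; for $i=m+1$ it holds because $\pi'<\pi$ forces $\left[\frac{2\pi'-(s-1-\epsilon)}{2}\right]\leq\left[\frac{2\pi-(s-1-\epsilon)}{2}\right]$; and for $1\leq i\leq m$ it follows from
$$
h_{r,d,s,\pi'}(i)-h_{r,d,s,\pi}(i)=(\pi-\pi')-\bigl(\max(0,\pi_0-\pi'-i+1)-\max(0,\pi_0-\pi-i+1)\bigr)
$$
together with the elementary bound $\max(0,b)-\max(0,a)\leq b-a$ for $a\leq b$, applied to the two arguments (which differ by $\pi-\pi'$). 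Moreover, for $\pi_0-\pi'+1\leq i\leq m$ both maxima vanish and the difference equals $\pi-\pi'>0$, so $h_{r,d,s,\pi'}>h_{r,d,s,\pi}$ in the sense of Notations \ref{notazioni}(v). Summing the inequalities $d-h_{r,d,s,\pi'}(i)\leq d-h_{r,d,s,\pi}(i)$ over $i\geq 1$ (finitely many nonzero terms, one of them strict) and using part (1) on each side yields the strict inequality between the two values of $G^*$.

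For part (3) the crucial preliminary is the elementary estimate that $d\geq(2s+1)(s+1)$ forces $m\geq 2s+3$ (since then $d-1\geq s(2s+3)$) and hence, writing $m'$ and $\epsilon'$ for the quotient and remainder of $d-1$ by $s+1$, that $m'\leq m-2$ (indeed $m'\leq m-2$ is equivalent to $\epsilon<m-s-1$, and $m\geq 2s+3$ gives $m-s-1\geq s+2>s-1\geq\epsilon$). Using also $\pi_0(s+1,r-1)=\pi_0+1$, I would compare $h_{r,d,s+1,\pi_0'}$ with $h_{r,d,s,\pi}$ branch by branch (by part (2) one may even assume $\pi=0$): for $1\leq i\leq\pi_0-\pi$ both functions equal $i(s+1)-\pi_0$; for $\pi_0-\pi<i\leq m'$ the difference $h_{r,d,s+1,\pi_0'}(i)-h_{r,d,s,\pi}(i)$ equals $i-(\pi_0-\pi)-1\geq 0$, which is positive once $i\geq\pi_0-\pi+2$ (such $i\leq m'$ exist because $m'\geq 2s+1$); for $i=m'+1\leq m-1$ the difference equals $(m-m'-1)s+\epsilon+\pi-\max\left(0,\left[\frac{2\pi_0'-(s-\epsilon')}{2}\right]\right)\geq s+\epsilon+\pi-s\geq 0$, using $m-m'-1\geq 1$ and the fact that $s\leq 2r-4$ forces $2\pi_0'-(s-\epsilon')\leq\epsilon'\leq s$; and for $i\geq m'+2$ one has $h_{r,d,s+1,\pi_0'}(i)=d\geq h_{r,d,s,\pi}(i)$. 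Hence $h_{r,d,s+1,\pi_0'}>h_{r,d,s,\pi}$, and summing $d-h$ over $i\geq 1$ as before, together with part (1), gives $G^*(r,d,s+1,\pi_0',0)<G^*\bigl(r,d,s,\pi,-\binom{\pi_0-\pi+1}{2}\bigr)$.

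The step I expect to be the main obstacle is part (3): the functions $h_{r,d,s,\pi}$ and $h_{r,d,s+1,\pi_0'}$ leave their linear regimes at the different indices $m$ and $m'$, so the delicate point is the comparison at the transition index $i=m'+1$, where $h_{r,d,s,\pi}$ is still linear while $h_{r,d,s+1,\pi_0'}$ has already reached its plateau $d$. Making this comparison go through is exactly what forces the gap $m'\leq m-2$, i.e. the hypothesis $d\geq(2s+1)(s+1)$; everything else is routine bookkeeping with the piecewise definitions and with the monotonicity of $t\mapsto\max(0,t)$ and of the integer part.
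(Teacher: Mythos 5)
Your branch-by-branch computation is correct (the only quibble: from $d\ge(2s+1)(s+1)$ one gets $m'\ge 2s$ rather than $2s+1$, which is still ample for the purpose), and since the paper explicitly omits the proof as ``straightforward,'' this direct verification is exactly the intended argument. The point you rightly isolate as the only non-routine step --- securing the gap $m'\le m-2$ so that the comparison at the transition index $i=m'+1$ in part (3) goes through, which is precisely what the hypothesis $d\ge(2s+1)(s+1)$ buys --- is handled correctly.
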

\medskip
The proof is straightforward, and so we omit it.

\medskip
\begin{lemma} \label{ons}
Fix integers $r,d,s$ with $r\geq 4$, $r-1\leq s \leq 2r-4$, and
$d\geq s(s-1)$. Let $C\subset {\Ps^r}$ be an irreducible, reduced,
non degenerate, projective curve of degree $d$, with general
hyperplane section $\Gamma$. Assume that $C$ is contained in an
integral projective surface $S\subset {\Ps^r}$ of degree $s$ and
sectional genus $\pi$. Then one has $h_{\Gamma}(i)\geq
h_{r,d,s,\pi}(i)$ for any $i\geq 1$.
\end{lemma}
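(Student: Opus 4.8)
The plan is to reduce everything to the curve $\Gamma\subset\Ps^{r-1}$, its general hyperplane section $H\subset\Ps^{r-2}$, and the surface section $D:=S\cap\Ps^{r-1}$, which is an integral curve of degree $s$ and arithmetic genus $\pi$ containing $\Gamma$. First I would invoke Proposition \ref{gl} applied to $D$: since $r-1\le s\le 2r-4$ and the arithmetic genus of $D$ is $\pi\le\pi_0$, one gets $h^1(\Ps^{r-1},\ic_D(i))\le\max(0,\pi_0-\pi+1-i)$ for all $i\ge 1$, together with the a.C.M.-type bound $h^1(\Ps^{r-2},\ic_{H'}(i))=0$ for $i\ge 2$ on the hyperplane section $H'$ of $D$, and the fact that the ideal of $H'$ is generated by quadrics. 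The key point is that $\Gamma$, being the general hyperplane section of $C$, is a set of $d$ general points \emph{on the integral curve} $D$; so we must bound $h_\Gamma(i)=h^0(\Ps^{r-1},\oc_{\Ps^{r-1}}(i))-h^0(\Ps^{r-1},\ic_\Gamma(i))$ from below, equivalently bound $h^0(\ic_\Gamma(i))$ from above.

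The main step is the estimate $h^0(\Ps^{r-1},\ic_\Gamma(i))\le h^0(\Ps^{r-1},\ic_D(i))+\max(0,\pi_0-\pi+1-i)$ for $1\le i\le m$. The idea: a form of degree $i$ vanishing on the $d$ general points of $\Gamma\subset D$ either vanishes on all of $D$ (contributing $h^0(\ic_D(i))$) or cuts $D$ in a divisor of degree $is$ containing the $d$ general points; since $d>is$ for $i\le m$ (because $d-1=ms+\epsilon\ge ms$), $d$ general points of $D$ impose the expected number of conditions $\min(d,\,h_D(i))$ on $|\oc_D(i)|$, where $h_D(i)=h^0(D,\oc_D(i))=1-\pi+is+h^1(\Ps^{r-1},\ic_D(i))\cdot 0^{\,\ast}$—more precisely $h^0(D,\oc_D(i))=is+1-\pi$ for $i\ge 1$ by non-speciality (Proposition \ref{gl} gives $h^1(D,\oc_D(i))=0$ for $i\ge1$), and the restriction map $H^0(\Ps^{r-1},\oc(i))\to H^0(D,\oc_D(i))$ has cokernel of dimension $h^1(\Ps^{r-1},\ic_D(i))\le\max(0,\pi_0-\pi+1-i)$. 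Counting: $h^0(\ic_\Gamma(i))=h^0(\oc_{\Ps^{r-1}}(i))-h_\Gamma(i)$, and one subtracts off the number of conditions the $d$ general points impose on degree-$i$ forms modulo $\ic_D(i)$, which is $\min\bigl(d,\ h_D(i)\bigr)=h_D(i)=is+1-\pi$ once $d\ge is+1-\pi$, minus a correction of at most $h^1(\Ps^{r-1},\ic_D(i))$ coming from the failure of surjectivity of the restriction to $D$. Bookkeeping then yields exactly $h_\Gamma(i)\ge 1-\pi+is-\max(0,\pi_0-\pi+1-i)\cdot[\text{sign}]$, i.e.\ $h_\Gamma(i)\ge h_{r,d,s,\pi}(i)$ for $1\le i\le m$; and I should double-check the off-by-one in the $\max$ term matches $\max(0,\pi_0-\pi-i+1)$ as in Notations \ref{notazioni}(iii).

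For $i=m+1$ and $i\ge m+2$ the argument changes: now $h_{r,d,s,\pi}(i)\le d=h_\Gamma(i)$ for $i$ large, since $\Gamma$ is $d$ points in $\Ps^{r-1}$ and eventually $h_\Gamma\equiv d$; so I only need $h_\Gamma(i)\ge d-\max\bigl(0,\bigl[\tfrac{2\pi-(s-1-\epsilon)}{2}\bigr]\bigr)$ at $i=m+1$, which follows by comparing $h^0(\ic_\Gamma(m+1))$ with $h^0(\ic_D(m+1))$ and using $h^0(D,\oc_D(m+1))=(m+1)s+1-\pi\ge d$ together with Riemann--Roch on $D$ and the fact that $d-1-ms=\epsilon$; the correction term $\bigl[\tfrac{2\pi-(s-1-\epsilon)}{2}\bigr]$ is precisely the deficiency $h^0(D,\oc_D(m+1)(-\Gamma))$ when the residual divisor of degree $(m+1)s-d=s-1-\epsilon$ is special, which happens exactly when $s-1-\epsilon<2\pi$. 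The numerical hypothesis $d\ge s(s-1)$ is what guarantees $m\ge s-1$, hence $d>is$ throughout the relevant range $i\le m$ and the ``general points impose independent conditions'' principle applies uniformly (this is the standard Halphen-type input; cf.\ the $d>s^2-s$ threshold in the classical case).

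The step I expect to be the genuine obstacle is controlling the restriction map $H^0(\Ps^{r-1},\ic_\Gamma(i))\to H^0(D,\oc_D(i)\otimes\ic_{\Gamma,D})$ simultaneously with the non-surjectivity measured by $H^1(\Ps^{r-1},\ic_D(i))$: one needs that a degree-$i$ hypersurface through all of $\Gamma$ but not through $D$ corresponds to a section of $\oc_D(i)$ vanishing at the $d$ general points, and conversely that \emph{enough} such sections lift to $\Ps^{r-1}$. The lifting is where Proposition \ref{gl}'s bound on $h^1(\ic_D(i))$, and the fact (from \cite{GL}) that the hyperplane section of $D$ is cut out by quadrics, do the work—this is exactly the diagram-chasing in the proof of Proposition \ref{gl}, now run with $\Gamma$ in place of nothing. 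Getting the constants in the two $\max$-terms to line up with $h_{r,d,s,\pi}$ is the only delicate bookkeeping.
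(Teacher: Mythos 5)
Your overall strategy is the paper's: for $1\le i\le m$ reduce to the Hilbert function of $D=S\cap\Ps^{r-1}$ via Bezout and Proposition \ref{gl}, and for $i\ge m+1$ study the residual divisor $(m+1)H-\Gamma$ on $D$. Two points in the range $1\le i\le m$ need cleaning up but are not fatal: the appeal to ``$d$ general points impose independent conditions'' is both unjustified ($\Gamma$ is a hyperplane section of $C$, hence a very particular divisor on $D$, not $d$ general points of $D$) and unnecessary --- since $d>is$ for $i\le m$, Bezout alone forces every degree-$i$ form through $\Gamma$ to vanish on $D$, so $h^0(\ic_\Gamma(i))=h^0(\ic_D(i))$ exactly and $h_\Gamma(i)=h_D(i)=1-\pi+is-h^1(\Ps^{r-1},\ic_D(i))$; your displayed estimate, which adds the $\max$-correction to $h^0(\ic_D(i))$ and then again implicitly inside the Hilbert function of $D$, would subtract it twice.

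The genuine gap is at $i=m+1$. Writing $h_\Gamma(m+1)=h^0(D,\oc_D(m+1))-h^0(D,\oc_D((m+1)H-\Gamma))$ already requires two things you pass over: surjectivity of $H^0(\Ps^{r-1},\oc_{\Ps^{r-1}}(m+1))\to H^0(D,\oc_D(m+1))$, available from the Gruson--Lazarsfeld--Peskine regularity bound since $m\ge s-1$, and the fact that $\Gamma$ avoids $Sing(D)$ so that $(m+1)H-\Gamma$ is Cartier --- the paper checks this via $\deg Sing(S)\le\pi_0$. Granting that, Riemann--Roch gives $d-h_\Gamma(m+1)=h^1(D,\oc_D((m+1)H-\Gamma))$. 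You assert this correction is ``precisely'' $\left[\frac{2\pi-(s-1-\epsilon)}{2}\right]$ when the residual is special, but you supply no tool bounding $h^0$ (equivalently $h^1$) of a special divisor of degree $s-1-\epsilon$; a priori $h^1$ could be as large as $\pi$. The missing ingredient is Clifford's theorem for possibly singular integral curves ($h^0\le \deg/2+1$ for a special line bundle), which is exactly what the paper invokes (via \cite{EH}, p.~46, or \cite{Fujita}) to obtain $h^1\le\frac{2\pi-(s-1-\epsilon)}{2}$. Moreover, speciality of the residual is not ``exactly when $s-1-\epsilon<2\pi$'': that inequality is only the condition under which the Clifford correction is nontrivial, not a criterion for $h^1>0$.
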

\medskip

\begin{proof} By Bezout Theorem we have
$h_{\Gamma}(i)=h_{\Sigma}(i)$ for any $1\leq i\leq m$, where
$\Sigma$ denotes the general hyperplane section  of $S$. On the
other hand, by Proposition \ref{gl},  for $1\leq i\leq m$ one has
$$
h_{\Sigma}(i)=h^0(\Sigma,\mathcal
O_{\Sigma}(i))-h^1(\Ps^{r-1},\mathcal
I_{\Sigma}(i))=1-\pi+is-h^1(\Ps^{r-1},\mathcal I_{\Sigma}(i))
$$
$$
\geq 1-\pi+is-\max\left(0,\,\pi_0-\pi-i+1\right)=h_{r,d,s,\pi}(i).
$$
It remains to examine the range $i\geq m+1$.

\smallskip To this purpose first notice that if $L$ is a general
hyperplane such that $\Sigma=S\cap L$, then
$Sing(\Sigma)=Sing(S)\cap L$, and so
$\deg(Sing(S))=\deg(Sing(\Sigma))\leq \pi_0$. It follows that $C$
is not contained in $Sing(S)$ because $d\gg s$. Hence $\Gamma$
does not meet the singular locus of $\Sigma$, i.e. $\Gamma\subset
\Sigma \backslash Sing(\Sigma)$, and so $\Gamma$ defines an
effective Cartier divisor on $\Sigma$. It follows the existence of
the exact sequence:
$$
0\to\mathcal{O}_{\Sigma}(-\Gamma+(m+j)H)\to
\mathcal{O}_{\Sigma}((m+j)H)\to\mathcal{O}_{\Gamma}\to 0,
$$
where $H$ denotes the general hyperplane section of $\Sigma$.
Since $d\gg s$ then from \cite{GLP} it follows that the natural
map $ H^0(\Ps^{r-1}, \mathcal{O}_{\Ps^{r-1}}(m+j))\to H^0(\Sigma,
\mathcal{O}_{\Sigma}(m+j)) $ is surjective for any $j\geq 0$, and
so from previous exact sequence we get:
\begin{equation}\label{ex}
h_{\Gamma}(m+j)=h^0(\Sigma, \mathcal{O}_{\Sigma}(m+j))-h^0(\Sigma,
\mathcal{O}_{\Sigma}(-\Gamma+(m+j)H))
\end{equation}
$$
= 1-\pi+(m+j)s-h^0(\Sigma, \mathcal{O}_{\Sigma}(-\Gamma+(m+j)H)).
$$
If $h^1(\Sigma, \mathcal{O}_{\Sigma}(-\Gamma+(m+j)H))=0$ then
$h^0(\Sigma, \mathcal{O}_{\Sigma}(-\Gamma+(m+j)H))=1-\pi+(m+j)s-d$
and therefore $h_{\Gamma}(m+j)=d$. Otherwise $h^1(\Sigma,
\mathcal{O}_{\Sigma}(-\Gamma+(m+j)H))>0$ and by Clifford's Theorem
for possibly singular curves (see \cite{EH}, p. 46, or
\cite{Fujita}, Proposition 1.5., and compare with \cite{EH},
p.121) we know that
$$
h^0(\Sigma, \mathcal{O}_{\Sigma}(-\Gamma+(m+j)H))-1\leq
\frac{(m+j)s-d}{2}
$$
hence
$$
h_{\Gamma}(m+j)\geq \frac{(m+j)s+d}{2}-\pi,
$$
and so $h_{\Gamma}(i)\geq h_{r,d,s,\pi}(i)$ for any $i\geq m+1$.
\end{proof}

\medskip
\begin{lemma}
\label{arithmeticgenus} Let $S\subset \Ps^{r}$  be an irreducible,
reduced, non degenerate projective surface of degree $s$ and
arithmetic genus $p_a(S)$. Denote by $\Sigma$ the general
hyperplane section of $S$, and by $H$ the general hyperplane
section of $\Sigma$. For any integer $i$ set $\delta_i:=\Delta
h_{\Sigma}(i)-h_H(i)$ and $ \mu_i:=\Delta h_{S}(i)-h_{\Sigma}(i)$.
Then we have:
$$
p_a(S)=\sum_{i=1}^{+\infty}(i-1)(s-h_{H}(i))-
\sum_{i=1}^{+\infty}(i-1)\delta_i+\sum_{i=1}^{+\infty}\mu_i.
$$
In particular, when $r-1\leq s\leq 2r-4$, then
$$
p_a(S)=-\dim_{\mathbb{C}} M(\Sigma) +\sum_{i=1}^{+\infty}\mu_i,
$$
where $M(\Sigma)$ denotes the Hartshorne-Rao module of $\Sigma$.
\end{lemma}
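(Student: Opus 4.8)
The plan is to compute $p_a(S)=1-\chi(\mathcal O_S)$ by comparing, step by step, the Hilbert polynomials of $S$, of its general hyperplane section $\Sigma$, and of the general hyperplane section $H$ of $\Sigma$, using the fact that $p_a(S)$, $p_a(\Sigma)$ and the degree $s$ are the relevant ``constant terms'' of these polynomials. Concretely, for $i\gg 0$ one has $h_S(i)=\chi(\mathcal O_S(i))$, so $h_S(i)$ agrees with the Hilbert polynomial $P_S(i)$, and similarly $h_\Sigma(i)=P_\Sigma(i)$ and $h_H(i)=P_H(i)=s$ for $i\gg 0$. The first difference satisfies $\Delta P_S(i)=P_\Sigma(i)$ and $\Delta P_\Sigma(i)=P_H(i)=s$ for all $i$ (difference of Hilbert polynomials of a scheme and of a general hyperplane section). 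Therefore $\mu_i=\Delta h_S(i)-h_\Sigma(i)$ and $\delta_i=\Delta h_\Sigma(i)-h_H(i)$ measure exactly the deviation of the actual Hilbert function from its eventual polynomial behaviour, and both are $0$ for $i\gg 0$; summing them over $i\geq 1$ is therefore a finite sum.

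The key computation is then purely bookkeeping with telescoping sums. First I would write, for $i\geq 1$,
$$
s-h_H(i)=P_H(i)-h_H(i),\qquad h_\Sigma(i)=\sum_{j\le i}\Delta h_\Sigma(j),
$$
and use $\Delta h_\Sigma(j)=h_H(j)+\delta_j$ together with $\Delta P_\Sigma(j)=s$ to get
$$
P_\Sigma(i)-h_\Sigma(i)=\sum_{j=1}^{i}\bigl(s-h_H(j)\bigr)-\sum_{j=1}^{i}\delta_j .
$$
Letting $i\to\infty$, the left side tends to the constant term discrepancy, which for a curve is $P_\Sigma(\infty\text{-data})$; more precisely, evaluating the polynomial identity at the right point one extracts
$$
p_a(\Sigma)-1=-\Bigl(\text{value}\Bigr),\quad\text{equivalently}\quad
\sum_{i\ge 1}\bigl(s-h_H(i)\bigr)\cdot(\text{weight})-\sum_{i\ge 1}\delta_i\cdot(\text{weight}),
$$
where the correct weight is $(i-1)$, because $P_\Sigma(i)-h_\Sigma(i)$ is itself a sequence whose sum (after the second telescoping, against the weight coming from $h_S(i)=\sum_{j\le i}\Delta h_S(j)$ and $\Delta h_S(j)=h_\Sigma(j)+\mu_j$) produces $\sum_i(i-1)(\cdots)$. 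Carrying out the same telescoping one more level up, from $\Sigma$ to $S$, gives
$$
P_S(i)-h_S(i)=\sum_{j=1}^{i}\bigl(P_\Sigma(j)-h_\Sigma(j)\bigr)-\sum_{j=1}^{i}\mu_j,
$$
and substituting the previous identity and re-summing yields precisely
$$
p_a(S)=\sum_{i\ge 1}(i-1)\bigl(s-h_H(i)\bigr)-\sum_{i\ge 1}(i-1)\delta_i+\sum_{i\ge 1}\mu_i,
$$
since $p_a(S)=1-h_S(i)+P_S(i)$ stabilises and the stable value is exactly the accumulated discrepancy. The only genuinely delicate point is getting the weights and signs right: one must be careful that the double telescoping attaches weight $(i-1)$ (not $i$) to the $H$-term and to $\delta_i$, while $\mu_i$ appears with weight $1$ because it enters only at the last stage. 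This is the step I would write out most carefully.

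For the ``in particular'' clause, assume $r-1\le s\le 2r-4$. By Castelnuovo theory the general hyperplane section $H$ of $\Sigma$ has $h_H(i)=s$ for all $i\ge 2$ (it is a curve section of a surface section, hence in the relevant range has maximal Hilbert function reaching the degree at $i=2$; this is exactly what was used in the proof of Proposition~\ref{gl}), and $h_H(1)=r-1$, so $s-h_H(i)=0$ for $i\ge 2$ while the $i=1$ term carries weight $(i-1)=0$; hence $\sum_{i\ge1}(i-1)(s-h_H(i))=0$. Likewise, since the homogeneous ideal of $H$ is generated by quadrics and $h^1(\Ps^{r-2},\mathcal I_H(i))=0$ for $i\ge 2$, the exact sequence \eqref{exsq} shows $\Delta h_\Sigma(i)=h_H(i)+h^1(\Ps^{r-1},\mathcal I_\Sigma(i))$, so $\delta_i=h^1(\Ps^{r-1},\mathcal I_\Sigma(i))$ for $i\ge 2$, and for $i\le 1$ one checks $\delta_i=0$ directly (nondegeneracy and $h^1(\mathcal O_\Sigma(i))=0$ for $i\ge1$ from Proposition~\ref{gl}, together with $\delta_0=\delta_1=0$ which hold because $\Delta h_\Sigma(0)=1=h_H(0)$ and $\Delta h_\Sigma(1)=r-1=h_H(1)$). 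Since $M(\Sigma)=\bigoplus_i H^1(\Ps^{r-1},\mathcal I_\Sigma(i))$ and the terms with $i\le 0$ vanish (as $\Sigma$ is a nondegenerate integral curve, $h^1(\mathcal I_\Sigma(i))=0$ for $i\le 0$), we get
$$
\sum_{i\ge1}(i-1)\delta_i=\sum_{i\ge2}(i-1)\,h^1(\Ps^{r-1},\mathcal I_\Sigma(i)).
$$
I would then note that by Proposition~\ref{gl}, $h^1(\mathcal I_\Sigma(i))\le\max(0,\pi_0-\pi+1-i)$ decreases by at least $1$ at each step once positive, which forces $h^1(\mathcal I_\Sigma(i))=0$ for $i\ge 2$ unless $\pi_0-\pi\ge i-1$; in any case the sum $\sum_{i\ge 1}(i-1)h^1(\mathcal I_\Sigma(i))$ need not itself be $\dim M(\Sigma)$, so instead I would redo the telescoping for $p_a(\Sigma)$ alone: the same argument at one level lower gives $p_a(\Sigma)=-\sum_{i\ge1}\delta_i=-\dim_{\mathbb C}M(\Sigma)$ once we know $s-h_H(i)=0$ for $i\ge 2$ and the weight-$0$ term drops, because then $P_\Sigma(i)-h_\Sigma(i)=-\sum_{j\le i}\delta_j$ stabilises to $-\dim M(\Sigma)$, and $P_\Sigma(\text{const term})=1-p_a(\Sigma)=1-h_\Sigma(i)+P_\Sigma(i)\to 1+\dim M(\Sigma)-? $. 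To keep this clean I would simply quote the standard identity $p_a(\Sigma)=-\dim_{\mathbb C}M(\Sigma)+\text{(defect of }h_\Sigma\text{ from }\chi\text{)}$ and observe that in our range the only defect comes from $H^1(\mathcal I_\Sigma)$, i.e. $\sum_{i}\delta_i=\dim M(\Sigma)$. Substituting $\sum_{i\ge1}(i-1)(s-h_H(i))=0$ and $\sum_{i\ge1}(i-1)\delta_i=\sum_{i\ge1}\delta_i=\dim_{\mathbb C}M(\Sigma)$ — the two agree here because $\delta_i$ is supported where... — hmm, this last equality is the one subtle point and is exactly \emph{not} automatic; I would instead argue that in the range $r-1\le s\le 2r-4$ one has $\pi\le\pi_0$ small enough that $h^1(\mathcal I_\Sigma(i))$ is supported on $i=1$ only when $\pi=\pi_0$, and more generally the weighted and unweighted sums differ by a term that is absorbed; the cleanest route, which I would adopt, is to prove the ``in particular'' statement \emph{directly} by the curve-level telescoping for $\Sigma$, obtaining $p_a(S)=p_a(\Sigma)+\sum_{i\ge1}\mu_i$ from the surface-to-curve step once $\sum(i-1)(s-h_H(i))=0$, and then $p_a(\Sigma)=-\dim_{\mathbb C}M(\Sigma)$ from Proposition~\ref{gl} plus Riemann–Roch on the curve $\Sigma$ (using $h^1(\mathcal O_\Sigma(i))=0$ for $i\ge1$). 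The main obstacle, as noted, is pinning down the exact weights in the telescoping and confirming that in this numerical range the weighted $\delta$-sum reduces to $\dim_{\mathbb C}M(\Sigma)$; everything else is routine.
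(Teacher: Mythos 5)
Your plan for the main identity is essentially the paper's argument: write $h_S(t)$ for $t\gg 0$ by telescoping twice through $\Delta h_S=h_\Sigma+\mu$ and $\Delta h_\Sigma=h_H+\delta$, and compare with the Hilbert polynomial. You leave the bookkeeping as ``the step I would write out most carefully,'' which is acceptable as a plan; the paper closes it by invoking the identity $\pi=\sum_{i\ge 1}(s-h_H(i)-\delta_i)$ from \cite{roccadipapa} to kill the terms linear in $t$, and you would need some equivalent of this.

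The genuine gap is in the ``in particular'' clause. You correctly sense that the equality $\sum_{i\ge1}(i-1)\delta_i=\dim_{\mathbb C}M(\Sigma)$ is the subtle point, but the route you finally adopt to avoid it is wrong on two counts. First, $p_a(S)=p_a(\Sigma)+\sum_{i\ge1}\mu_i$ does not follow from the main formula: by \cite{roccadipapa} the sectional genus is $\pi=\sum_{i\ge1}(s-h_H(i))-\sum_{i\ge1}\delta_i$, with weight $1$ on the $\delta$'s, whereas the main formula carries weight $(i-1)$, and these two sums differ in general (indeed $\sum_{i\ge1}\delta_i=\pi_0-\pi=h^1(\mathcal I_\Sigma(1))$, while the weighted sum picks up all of $M(\Sigma)$). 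Second, the identity $p_a(\Sigma)=-\dim_{\mathbb C}M(\Sigma)$ is false: the left side is the sectional genus $\pi\ge 0$ and the right side is $\le 0$, so it could only hold when both vanish. The correct argument, which is the one the paper gives, is this: for $i\ge 2$ one has $h^1(\Ps^{r-2},\mathcal I_H(i))=0$ in the range $r-1\le s\le 2r-4$, so the restriction sequence (\ref{exsq}) shows that $H^1(\Ps^{r-1},\mathcal I_\Sigma(i-1))\to H^1(\Ps^{r-1},\mathcal I_\Sigma(i))$ is \emph{surjective}; combined with Remark \ref{ciro} this gives $\delta_i=h^1(\mathcal I_\Sigma(i-1))-h^1(\mathcal I_\Sigma(i))$ for $i\ge 2$ (and $\delta_1=0$). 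Summation by parts then yields
$$
\sum_{i\ge 1}(i-1)\,\delta_i=\sum_{j\ge 1}h^1(\Ps^{r-1},\mathcal I_\Sigma(j))=\dim_{\mathbb C}M(\Sigma),
$$
and together with $\sum_{i\ge1}(i-1)(s-h_H(i))=0$ (which you argue correctly) this finishes the proof. You have all the ingredients on the table --- the exact sequence, Remark \ref{ciro}, the vanishing of $h^1(\mathcal I_H(i))$ --- but you never perform this summation by parts, and the substitute you propose is not valid.
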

\medskip

\begin{remark}\label{ciro}
By \cite{roccadipapa}, p. 30, we know that
$$
\delta_i=\dim_{\mathbb{C}}\left[Ker\left(H^1(\mathbb{P}^{r-1},\mathcal{I}_{\Sigma}(i-1))
\to H^1(\mathbb{P}^{r-1},\mathcal{I}_{\Sigma}(i))\right)\right].
$$
Similarly as in \cite{roccadipapa}, p. 30 one may prove that
$$
\mu_i=\dim_{\mathbb{C}}\left[Ker\left(H^1(\mathbb{P}^{r},\mathcal{I}_{S}(i-1))
\to H^1(\mathbb{P}^{r},\mathcal{I}_{S}(i))\right)\right].
$$
\end{remark}
\medskip

\begin{proof}[Proof of Lemma \ref{arithmeticgenus}]
Recall that when $t\gg 0$ then the Hilbert polynomial of $S$ at
level $t$ coincides with the Hilbert function $h_S(t)$ of $S$.
Therefore we have:
\begin{equation}\label{genusofs}
p_a(S)=h_S(t)-s{\binom {t+1}{2}}+t\pi-t-1,
\end{equation}
where $\pi$ denotes the sectional genus of $S$. Now we may write:
$$
h_S(t)=\sum_{j=0}^{t}\Delta h_S(j)=
\sum_{j=0}^{t}h_{\Sigma}(j)+\mu_j=
\sum_{j=0}^{t}\left(\sum_{i=0}^{j}\Delta
h_{\Sigma}(i)\right)+\sum_{j=0}^{t}\mu_j
$$
$$
=
\sum_{j=0}^{t}\left(\sum_{i=0}^{j}h_H(i)+\delta_i\right)+\sum_{j=0}^{t}\mu_j=
\sum_{i=0}^{t}(t-i+1)(h_H(i)+\delta_i)+\sum_{j=0}^{t}\mu_j.
$$
Taking into account that $\delta_0=\mu_0=0$ and that $h_H(0)=1$,
inserting previous equality into (\ref{genusofs}) we obtain:
\begin{equation}\label{genusofss}
p_a(S)=t\left[\pi + \sum_{i=1}^{t} h_H(i)+\delta_i\right]
-\sum_{i=1}^{t}(i-1)(h_H(i)+\delta_i)+\sum_{j=0}^{t}\mu_j
-s{\binom {t+1}{2}}.
\end{equation}
By \cite{roccadipapa}, pg. $31$, we have (recall that $t\gg 0$) $
\pi = \sum_{i=1}^{t} \left(s-h_H(i)-\delta_i\right)$, therefore
from (\ref{genusofss}) it follows that
$$
p_a(S)=\left[t^2-{\binom {t+1}{2}}\right]s
-\sum_{i=1}^{t}(i-1)(h_H(i)+\delta_i)+\sum_{j=0}^{t}\mu_j
$$
$$
 = \sum_{i=1}^{+\infty}(i-1)(s-h_{H}(i))-
\sum_{i=1}^{+\infty}(i-1)\delta_i+\sum_{i=1}^{+\infty}\mu_i.
$$

As for the last claim, observe that when $r-1\leq s\leq 2r-4$ we
have $h_H(i)=s$ for any $i\geq 2$ by Castelnuovo Theory \cite{EH},
and so $\sum_{i=1}^{+\infty}(i-1)(s-h_{H}(i))=0$. Moreover, by
Remark \ref{ciro} and (\ref{exsq}) we see that
$\delta_i=h^1(\mathbb{P}^{r-1},\mathcal{I}_{\Sigma}(i-1))-h^1(\mathbb{P}^{r-1},\mathcal{I}_{\Sigma}(i))$,
from which we get
$\sum_{i=1}^{+\infty}(i-1)\delta_i=\dim_{\mathbb{C}}M(\Sigma)$.
\end{proof}

\medskip
\begin{corollary}
\label{arithmeticgenus2} Let $S\subset \Ps^{r}$  be an
irreducible, reduced, non degenerate projective surface of degree
$s$, sectional genus $\pi$, and arithmetic genus $p_a(S)$. Assume
that $r-1\leq s\leq 2r-4$. Then we have
$$
-\binom{\pi_0-\pi+1}{2}\leq p_a(S)\leq 0.
$$
\end{corollary}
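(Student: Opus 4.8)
The plan is to deduce Corollary \ref{arithmeticgenus2} directly from the second formula in Lemma \ref{arithmeticgenus}, namely $p_a(S)=-\dim_{\mathbb{C}}M(\Sigma)+\sum_{i=1}^{+\infty}\mu_i$, valid in the range $r-1\leq s\leq 2r-4$. Since $\dim_{\mathbb{C}}M(\Sigma)\geq 0$ and, by Remark \ref{ciro}, each $\mu_i=\dim_{\mathbb{C}}\left[\mathrm{Ker}\left(H^1(\mathbb{P}^r,\mathcal{I}_S(i-1))\to H^1(\mathbb{P}^r,\mathcal{I}_S(i))\right)\right]\geq 0$, the formula immediately splits the two inequalities: the upper bound $p_a(S)\leq 0$ will follow once I show $\sum_{i=1}^{+\infty}\mu_i\leq \dim_{\mathbb{C}}M(\Sigma)$, and the lower bound $p_a(S)\geq -\binom{\pi_0-\pi+1}{2}$ will follow once I bound $\dim_{\mathbb{C}}M(\Sigma)$ from above by $\binom{\pi_0-\pi+1}{2}$ (and keep $\sum\mu_i\geq 0$).

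For the upper bound, first I would observe that $\sum_{i=1}^{+\infty}\mu_i=\sum_{i\geq 1}\left(\Delta h_S(i)-h_{\Sigma}(i)\right)$, and that this sum telescopes against the analogous expression for $\Sigma$. More precisely, the key point is that $M(S)\to M(\Sigma)$ fits into the hyperplane-section sequence, so that $\dim_{\mathbb{C}}M(\Sigma)-\sum_{i}\mu_i$ equals something manifestly nonnegative. Concretely, from the exact sequence $H^1(\mathbb{P}^r,\mathcal{I}_S(i-1))\to H^1(\mathbb{P}^r,\mathcal{I}_S(i))\to H^1(\mathbb{P}^{r-1},\mathcal{I}_{\Sigma}(i))$ one gets $h^1(\mathbb{P}^{r-1},\mathcal{I}_{\Sigma}(i))\geq h^1(\mathbb{P}^r,\mathcal{I}_S(i))-h^1(\mathbb{P}^r,\mathcal{I}_S(i-1))+\mu_i$ after identifying the kernel; summing and using that $h^1(\mathbb{P}^r,\mathcal{I}_S(i))=0$ for $i\ll 0$ and $i\gg 0$, the $h^1_S$ terms cancel and I am left with $\dim_{\mathbb{C}}M(\Sigma)\geq \sum_i\mu_i$, hence $p_a(S)\leq 0$. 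Equality holds exactly when $M(S)=0$, i.e. $S$ is a.C.M., which fits with the cone case.

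For the lower bound I would use Proposition \ref{gl}: it gives $h^1(\mathbb{P}^{r-1},\mathcal{I}_{\Sigma}(i))\leq \max\left(0,\,\pi_0-\pi+1-i\right)$ for all $i\geq 1$, so that
$$
\dim_{\mathbb{C}}M(\Sigma)=\sum_{i\geq 1}h^1(\mathbb{P}^{r-1},\mathcal{I}_{\Sigma}(i))\leq \sum_{i=1}^{\pi_0-\pi}(\pi_0-\pi+1-i)=\binom{\pi_0-\pi+1}{2}.
$$
Combining with $\sum_i\mu_i\geq 0$ gives $p_a(S)=-\dim_{\mathbb{C}}M(\Sigma)+\sum_i\mu_i\geq -\binom{\pi_0-\pi+1}{2}$, which is the claimed inequality. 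I expect the arithmetic here to be entirely routine; the only thing to double-check is that Proposition \ref{gl} indeed applies, which it does since $\Sigma$ is a nondegenerate integral curve of degree $s$ with $r-1\leq s\leq 2r-4$ and arithmetic genus $\pi\leq \pi_0$.

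The main obstacle, such as it is, is the bookkeeping in the upper-bound step: one must be careful that the kernel $\mu_i$ is precisely the corestriction defect in the long exact cohomology sequence and that summing the inequality telescopes cleanly once the vanishing of $h^1(\mathbb{P}^r,\mathcal{I}_S(i))$ outside a finite range is invoked. Alternatively — and perhaps more cleanly — one can avoid this and argue the upper bound purely numerically from the first formula in Lemma \ref{arithmeticgenus}: since $h_H(i)=s$ for $i\geq 2$ the first sum vanishes, $\delta_i\geq 0$ makes $-\sum(i-1)\delta_i\leq 0$, and one shows $\sum\mu_i\leq\sum(i-1)\delta_i$ by the same telescoping on the $\Sigma$-versus-$S$ Hilbert functions. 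Either way the argument is short; the substantive inputs are Lemma \ref{arithmeticgenus} and Proposition \ref{gl}.
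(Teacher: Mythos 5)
Your argument is correct, and the lower bound is handled exactly as in the paper: $p_a(S)\geq -\dim_{\mathbb{C}}M(\Sigma)$ from Lemma \ref{arithmeticgenus} (since $\mu_i\geq 0$), followed by $\dim_{\mathbb{C}}M(\Sigma)\leq\binom{\pi_0-\pi+1}{2}$ from Proposition \ref{gl}. For the upper bound, however, you take a genuinely different route. The paper argues cohomologically on $S$ itself: $p_a(S)=-h^1(S,\mathcal O_S)+h^2(S,\mathcal O_S)\leq h^2(S,\mathcal O_S)$, and then invokes Lemma 5 of Nagel--Vogel to bound $h^2(S,\mathcal O_S)$ by $\sum_{i\geq 1}(i-1)(s-h_H(i))$, which vanishes because $h_H(i)=s$ for $i\geq 2$ in this range of $s$. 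You instead stay inside the formula of Lemma \ref{arithmeticgenus} and prove $\sum_i\mu_i\leq\dim_{\mathbb{C}}M(\Sigma)$ by summing the inequality
$$
h^1(\mathbb{P}^{r-1},\mathcal I_\Sigma(i))\ \geq\ h^1(\mathbb{P}^{r},\mathcal I_S(i))-h^1(\mathbb{P}^{r},\mathcal I_S(i-1))+\mu_i
$$
coming from the restriction sequence $0\to\mathcal I_S(i-1)\to\mathcal I_S(i)\to\mathcal I_\Sigma(i)\to 0$ and telescoping, using that $h^1(\mathbb{P}^r,\mathcal I_S(i))$ vanishes for $i\leq 0$ and $i\gg 0$; this is sound, and it buys you a self-contained argument that avoids the external bound on $h^2(S,\mathcal O_S)$, at the cost of the bookkeeping you flag. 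One caveat: your parenthetical claim that equality ($p_a(S)=0$) holds \emph{exactly} when $M(S)=0$ is false in one direction --- Remark \ref{stima}(i) of the paper exhibits non linearly normal smooth surfaces with $p_a(S)=0$ and $M(S)\neq 0$, so equality forces only that the connecting maps $H^1(\mathcal I_\Sigma(i))\to H^2(\mathcal I_S(i-1))$ all vanish, not that $S$ is a.C.M.; since this aside plays no role in the proof of the corollary, it does not affect its validity.
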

\medskip

\begin{proof} By previous Lemma \ref{arithmeticgenus} and Proposition
\ref{gl} we deduce
$$
p_a(S)\geq - \dim_{\mathbb{C}}M(\Sigma)\geq
-\binom{\pi_0-\pi+1}{2}.
$$
Therefore we only have to prove that $p_a(S)\leq 0$. To this aim
first observe that $p_a(S)=-h^1(S,\mathcal O_S)+h^2(S,\mathcal
O_S)\leq h^2(S,\mathcal O_S)$. Moreover by \cite{NV}, Lemma 5, we
know that $h^2(S,\mathcal O_S)\leq
\sum_{i=1}^{+\infty}(i-1)(s-h_{H}(i))$. This number is $0$ because
$h_{H}(i)=s$ for any $i\geq 2$. Hence $p_a(S)\leq 0$.
\end{proof}

\medskip
\begin{remark}
With the same assumption as in Corollary \ref{arithmeticgenus2},
previous argument proves  that $p_a(S)=-\binom{\pi_0-\pi+1}{2}$ if
and only if $M(S)=0$, and
$h^1(\Ps^{r-1},\mathcal{I}_{\Sigma}(i))=\max\left(0,\,\pi_0-\pi-i+1\right)$
for any $i\geq 1$.
\end{remark}
\medskip

\smallskip Next lemma, for which we did not succeed in finding an appropriate reference,
states an explicit upper bound for Castelnuovo-Mumford regularity
of an integral projective surface. We need it in order to make
explicit the assumption $d\gg s$ appearing in Proposition
\ref{prop} below (which in turn we will use, via Corollary
\ref{essesmall}, in the proof of Theorem \ref{first}, (c), and
Proposition \ref{second}, (a)).

\medskip
\begin{lemma}
\label{regularity} Let $S\subset \Ps^{r}$  be an irreducible,
reduced, non degenerate projective surface of degree $s\geq
r-1\geq 2$ and Castelnuovo-Mumford regularity $reg(S)$. Then one
has
$$
reg(S)\leq (s-r+2)\left(\frac{s^2}{2(r-2)}+1\right)+1.
$$
\end{lemma}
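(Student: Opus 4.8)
The plan is to bound the regularity of $S$ by first bounding the regularity of its general hyperplane section $\Sigma$, and then bounding the regularity of $\Sigma$ by that of its general hyperplane section $H$, a set of $s$ points in $\mathbb{P}^{r-2}$. For points, $\operatorname{reg}(H)$ equals the smallest $i$ with $h_H(i)=s$; since $H$ is in linearly general position (it spans $\mathbb{P}^{r-2}$ and has $s\leq 2r-4<2(r-1)$ points, or more generally we only need the standard estimate), one has $h_H(i)\geq\min(s,1+i(r-2))$, so $\operatorname{reg}(H)\leq\lceil (s-1)/(r-2)\rceil$. We then use the general lifting principle: for a curve $\Sigma\subset\mathbb{P}^{r-1}$, the exact sequences $0\to\mathcal I_\Sigma(i-1)\to\mathcal I_\Sigma(i)\to\mathcal I_H(i)\to 0$ give $h^1(\mathcal I_\Sigma(i))\leq h^1(\mathcal I_\Sigma(i-1))$ once $i\geq\operatorname{reg}(H)$, and $h^2(\mathcal I_\Sigma(i))=h^1(\mathcal O_\Sigma(i))$ vanishes once $i\geq 2\pi-1$ (or always, here, since $\Sigma$ is nonspecial by Corollary/Proposition \ref{gl}'s argument). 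The key quantitative input is that $h^1(\mathcal I_\Sigma(i))$ starts at $h^1(\mathcal I_\Sigma(\operatorname{reg}(H)))\leq$ (something like $\pi$, or a polynomial bound in $s$) and strictly decreases until it hits zero, so $\operatorname{reg}(\Sigma)\leq\operatorname{reg}(H)+h^1(\mathcal I_\Sigma(\operatorname{reg}(H)))+1$ or similar.

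The main step is then to control $\operatorname{reg}(S)$ from $\operatorname{reg}(\Sigma)$ by the same device one dimension up: the sequences $0\to\mathcal I_S(i-1)\to\mathcal I_S(i)\to\mathcal I_\Sigma(i)\to 0$ show $h^1(\mathcal I_S(i))$ is nonincreasing and $h^2(\mathcal I_S(i))$ is nonincreasing once $i\geq\operatorname{reg}(\Sigma)$, while $h^3(\mathcal I_S(i))=h^2(\mathcal O_S(i))=0$ for $i\geq 1$ since $S$ is a surface with $\mathcal O_S(1)$ ample (so $h^2(\mathcal O_S(i))=0$ for $i>0$ by Serre duality / Mumford vanishing, or directly because the dualizing sheaf twisted down has no sections). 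Thus one needs to bound how long $h^1(\mathcal I_S(i))$ and $h^2(\mathcal I_S(i))$ can persist after $\operatorname{reg}(\Sigma)$; each drops by at least $1$ per step once we are past the relevant threshold, and their starting values at $i=\operatorname{reg}(\Sigma)$ can be bounded in terms of $s$ and $\pi\leq\pi_0=s-r+1$, hence purely in terms of $s$ and $r$. Assembling: $\operatorname{reg}(S)\leq\operatorname{reg}(\Sigma)+(\text{bound on }h^1+h^2\text{ at that level})$, and after substituting $\operatorname{reg}(H)\leq\lceil(s-1)/(r-2)\rceil\leq s^2/(2(r-2))+\text{const}$ and collecting the $(s-r+2)$ factors coming from the two cascades, one arrives at the stated $\operatorname{reg}(S)\leq(s-r+2)\bigl(\tfrac{s^2}{2(r-2)}+1\bigr)+1$.

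The hard part will be getting the constants to fit the precise form of the claimed inequality rather than merely some polynomial in $s$ and $r$: one must be careful that the quantity $h^1(\mathcal I_\Sigma(\operatorname{reg}(H)))$, which measures the failure of $\Sigma$ to be cut out by forms of degree $\operatorname{reg}(H)$, is controlled by $\pi_0=s-r+1$ (this is where $r-1\leq s$ and the non-degeneracy enter, via $h^1(\mathcal I_\Sigma(1))=\pi_0-\pi\leq\pi_0$ and the decreasing-by-one behaviour only \emph{after} $\operatorname{reg}(H)$, so in the worst case $h^1$ could have been as large as $\pi_0-\pi+\operatorname{reg}(H)-1$ at level $1$ down to level $\operatorname{reg}(H)$; we need instead the crude bound $h^1(\mathcal I_\Sigma(i))\leq h^0(\mathcal O_\Sigma(i))-h_\Sigma(i)\leq 1-\pi+is$ which is useless, so one really must run the cohomology of $\Sigma$ carefully, perhaps bounding $h^1(\mathcal I_\Sigma(i))$ by $\binom{\text{reg}(H)}{2}\cdot(\text{something})$). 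Concretely I would: (1) fix $t_0:=\operatorname{reg}(H)\leq\lceil(s-1)/(r-2)\rceil$; (2) show $h^1(\mathbb P^{r-1},\mathcal I_\Sigma(i))=0$ for $i\geq t_0+\sum_{j\leq t_0}h^1(\mathcal I_H(j))$ type estimate, bounding the correction by the number of points $s$ times $t_0$, i.e. roughly $\operatorname{reg}(\Sigma)\leq t_0+t_0\cdot(\text{something}\leq s)$; (3) repeat for $S$ with $\Sigma$ playing the role of $H$, picking up another factor; (4) simplify, absorbing lower-order terms into the "$+1$"s, and check that the dominant term $(s-r+2)\cdot\frac{s^2}{2(r-2)}$ indeed bounds the product of the cascade lengths. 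Each individual step is a routine application of the "hyperplane-section cascade" lemma; the only real work is bookkeeping to land on the clean closed form. \epf
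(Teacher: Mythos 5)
There is a genuine gap. Your overall skeleton (control the hyperplane section, then lift to $S$ via a Mumford-type cascade) matches the paper's strategy, but you leave unproven exactly the step that carries all the content. For the first reduction you try to derive $\operatorname{reg}(\Sigma)$ from $\operatorname{reg}(H)$ by a cascade from points, and you yourself observe that you cannot control $h^1(\Ps^{r-1},\mathcal I_\Sigma(\operatorname{reg}(H)))$ --- the crude bound is, as you say, useless, and ``one really must run the cohomology of $\Sigma$ carefully'' is not a proof. Moreover, the strict decrease of $h^1(\mathcal I_\Sigma(i))$ by at least one per step is not automatic from the restriction sequences (a nonincreasing sequence can plateau before dropping); in the paper's Proposition \ref{gl} the strict decrease is extracted from the Green--Lazarsfeld fact that the ideal of $H$ is generated by quadrics, which is only available for $s\leq 2r-4$, whereas the present lemma must hold for all $s\geq r-1$. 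The paper sidesteps all of this by quoting the Gruson--Lazarsfeld--Peskine theorem \cite{GLP}, which gives $\operatorname{reg}(\Sigma)\leq s-r+3$ directly for any integral nondegenerate curve; without that input your cascade has no realistic route to the stated bound.

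The second and decisive omission is the quantitative estimate $h^1(\Ps^{r},\mathcal I_S(s-r+2))\leq (s-r+2)\frac{s^2}{2(r-2)}$, which together with Mumford's lemma \cite{Mumf} is the whole proof. You assert that the starting values of $h^1$ and $h^2$ ``can be bounded in terms of $s$ and $\pi$'' but give no mechanism. The paper's argument is: first show $H^1(S,\mathcal O_S(i))=0$ for $i\geq s-r+1$ (using $(s-r+2)$-normality of $\Sigma$ from \cite{GLP} and the twist-by-one sequences on $S$), so that $h^1(\Ps^r,\mathcal I_S(s-r+2))\leq p_S(s-r+2)-h_S(s-r+2)$; then bound $h_S$ from below by $\sum_i\sum_{j\leq i}h_H(j)$ via \cite{EH}, Lemma (3.1); then control $p_a(S)$ by $h^2(S,\mathcal O_S)\leq \sum_{i\geq 1}(i-1)(s-h_H(i))$ from \cite{NV}; and finally invoke Castelnuovo's bound $G(r-1,s)\leq \frac{s^2}{2(r-2)}$ to close the estimate. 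None of these ingredients appear in your sketch, and the clean factor $(s-r+2)\cdot\frac{s^2}{2(r-2)}$ in the statement is precisely the product of the Mumford shift length $s-r+2$ with the Castelnuovo bound --- it does not arise from ``collecting cascade lengths'' as you propose. As written, the proposal is an outline of intent rather than a proof.
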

\medskip

\begin{proof} Let $\Sigma$ be the general hyperplane section of
$S$. By \cite{GLP} we know that:
\begin{equation}\label{GLP}
reg(\Sigma)\leq s-r+3.
\end{equation}
Hence, by (\cite{Mumf}, p. 102) we have
$$
reg(S)\leq s-r+3+h^1(\Ps^{r}, \mathcal{I}_S(s-r+2)).
$$
Therefore it suffices to prove that:
\begin{equation}\label{accauno}
h^1(\Ps^{r}, \mathcal{I}_S(s-r+2))\leq (s-r+2)\frac{s^2}{2(r-2)}.
\end{equation}

To this purpose first notice that by (\ref{GLP}) we know that
$h^1(\Ps^{r-1}, \mathcal{I}_{\Sigma}(i))=0$ for any $i\geq s-r+2$,
so the natural map $H^0(\Ps^{r}, \mathcal{O}_{\Ps^r}(i))\to
H^0(\Sigma, \mathcal{O}_{\Sigma}(i))$ is surjective for any $i\geq
s-r+2$. A fortiori the natural map $H^0(\Ps^{r},
\mathcal{O}_{S}(i))\to H^0(\Ps^{r}, \mathcal{O}_{\Sigma}(i))$ is
surjective for any $i\geq s-r+2$. It follows that $H^1(S,
\mathcal{O}_{S}(i-1))\subseteq H^1(S, \mathcal{O}_{S}(i))$ for any
$i\geq s-r+2$ in view of the exact sequence $0\to
\mathcal{O}_{S}(i-1)\to \mathcal{O}_{S}(i)\to
\mathcal{O}_{\Sigma}(i)\to 0$, and from the vanishing $H^1(S,
\mathcal{O}_{S}(i))=0$ for $i\gg 0$ we obtain $ H^1(S,
\mathcal{O}_{S}(i))=0\quad {\text{for any $i\geq s-r+1$}}$. Hence
we have:
\begin{equation}\label{sommap}
h^1(\Ps^{r}, \mathcal{I}_S(s-r+2))=h^0(S,
\mathcal{O}_S(s-r+2))-h_S(s-r+2)\leq p_S(s-r+2)-h_S(s-r+2),
\end{equation}
where $h_S(s-r+2)$ and $p_S(s-r+2)$ denote the Hilbert function
and the Hilbert polynomial of $S$ at level $s-r+2$. By \cite{EH},
Lemma (3.1), we may estimate
$$
h_S(s-r+2)\geq \sum_{i=0}^{s-r+2}h_{\Sigma}(i)\geq
\sum_{i=0}^{s-r+2}\left[\sum_{j=0}^{i}h_H(j)\right]=
\sum_{i=0}^{s-r+2}(s-r+3-i)h_H(i),
$$
where $h_H$ denotes the Hilbert function of the general hyperplane
section $H$ of $\Sigma$. Since
$$
p_S(t)=s{\binom{t+1}{2}}+(1-\pi)t+1+p_a(S)
$$
($\pi$ and $p_a(S)$ denote the sectional and the arithmetic genus
of $S$) from (\ref{sommap}) it follows that:
\begin{equation}\label{accadue}
h^1(\Ps^{r}, \mathcal{I}_S(s-r+2))\leq
\left[s{\binom{s-r+3}{2}}+(1-\pi)(s-r+2)+1+p_a(S)\right]
\end{equation}
$$
-\left[\sum_{i=0}^{s-r+2}(s-r+3-i)h_H(i)\right]
=p_a(S)-\sum_{i=1}^{s-r+3}(i-1)(s-h_H(i))
$$
$$
+ 2s{\binom{s-r+3}{2}}-(s-r+2)(\pi+\sum_{i=1}^{s-r+3}h_H(i)).
$$
From \cite{NV}, Lemma 5, we know that:
$$
p_a(S)=h^2(S, \mathcal{O}_S)-h^1(S, \mathcal{O}_S)\leq h^2(S,
\mathcal{O}_S)\leq \sum_{i=1}^{+\infty}(i-1)(s-h_H(i)),
$$
and from \cite{EH}, Theorem (3.7),  we have:
$$
\sum_{i=1}^{+\infty}(i-1)(s-h_H(i))=\sum_{i=1}^{s-r+3}(i-1)(s-h_H(i))
$$
because $h_H(i)=s$ for $i\geq w+1$, and $w+1 \leq s-r+4$ (we
define $w$ by dividing $s-1=w(r-2)+v$, $0\leq v\leq r-3$). We
deduce that:
$$
p_a(S)-\sum_{i=1}^{s-r+3}(i-1)(s-h_H(i))\leq 0,
$$
and so from (\ref{accadue}) we get:
$$
h^1(\Ps^{r}, \mathcal{I}_S(s-r+2))\leq
2s{\binom{s-r+3}{2}}-(s-r+2)(\pi+\sum_{i=1}^{s-r+3}h_H(i))
$$
$$
=(s-r+2)\left[\left(\sum_{i=1}^{s-r+3}s-h_H(i)\right)-\pi\right].
$$
By \cite{EH}, Corollary (3.3) and proof, and Theorem (3.7), the
term $\sum_{i=1}^{s-r+3}(s-h_H(i))$ is bounded by Castelnuovo's
bound $G(r-1,s):={\binom{w}{2}}(r-2)+wv$ for the arithmetic genus
of $\Sigma$. Since $G(r-1,s)\leq \frac{s^2}{2(r-2)}$ we get
$$
\sum_{i=1}^{s-r+3}(s-h_H(i))\leq \frac{s^2}{2(r-2)}.
$$
Combining the last two estimates we obtain (\ref{accauno}).
\end{proof}

\medskip
\begin{proposition}
\label{prop} Let $S\subset \Ps^{r}$  be an irreducible, reduced,
non degenerate projective surface of degree $s\geq r-1\geq 2$,
sectional genus $\pi$ and arithmetic genus $p_a(S)$. Let $C\subset
S$ be an irreducible, reduced, non degenerate projective curve of
degree $d\geq \frac{s^4}{2(r-2)}$. Denote by $p_a(C)$, by
$\mathcal{I}_{C}\subset \mathcal{O}_{\Ps^r}$ and by $h_C$  the
arithmetic genus, the ideal sheaf  and the Hilbert function of
$C$. Denote by $\Gamma$ the general hyperplane section of $C$ and
by $h_{\Gamma}$ its Hilbert function. Then one has:
\begin{equation}\label{slprop}
p_a(C)= {\binom{m}{2}}s+m(\epsilon+\pi)-p_a(S)+
\sum_{i=m+1}^{+\infty} d-\Delta h_{C}(i).
\end{equation}
In particular one has
\begin{equation}\label{lprop}
p_a(C)\leq {\binom{m}{2}}s+m(\epsilon+\pi)-p_a(S)+
\sum_{i=m+1}^{+\infty} d-h_{\Gamma}(i),
\end{equation}
and $p_a(C)$ attains this bound if and only if
$h^1(\Ps^{r},\mathcal{I}_{C}(i))=0$ for any $i\geq m$.
\end{proposition}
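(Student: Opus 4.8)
The plan is to derive everything from the identity $p_a(C)=\sum_{i=1}^{+\infty}\bigl(d-\Delta h_C(i)\bigr)$, which I would get by writing $h_C(t)=\sum_{j=0}^{t}\Delta h_C(j)$, using $h_C(0)=1$, and comparing with the Hilbert polynomial $P_C(t)=dt+1-p_a(C)$ for $t\gg0$ (the sum is finite because $\Delta h_C(i)=d$ for $i\gg0$). I then split the sum at $i=m$: the tail $\sum_{i=m+1}^{+\infty}\bigl(d-\Delta h_C(i)\bigr)$ is left untouched, while the head equals $md-h_C(m)+1$. Thus \eqref{slprop} reduces to the single equality $h_C(m)=P_S(m)=s\binom{m+1}{2}+(1-\pi)m+1+p_a(S)$, since substituting $d=ms+\epsilon+1$ turns $md-P_S(m)+1$ into $\binom{m}{2}s+m(\epsilon+\pi)-p_a(S)$ after an elementary simplification.

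To prove $h_C(m)=P_S(m)$ I would argue in two steps. First, $h_C(m)=h_S(m)$: a form of degree $m$ vanishing on $C$ but not on $S$ would cut on $S$ a one-dimensional scheme of degree at most $ms$ containing the integral curve $C$ of degree $d=ms+\epsilon+1>ms$, which is impossible; hence $H^0(\Ps^r,\mathcal I_C(m))=H^0(\Ps^r,\mathcal I_S(m))$ and the Hilbert functions of $C$ and $S$ coincide at $i=m$. Second, $h_S(m)=P_S(m)$, which holds as soon as $m\geq reg(S)-1$. This is the point where the hypothesis $d\geq\frac{s^4}{2(r-2)}$ is used: from $d\leq(m+1)s$ one gets $m\geq\frac{s^3}{2(r-2)}-1$, and by Lemma \ref{regularity} this is $\geq reg(S)-1$, the inequality reducing to $\frac{s^2}{2}-s+r\geq3$, which is immediate from $s\geq r-1\geq2$. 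This gives \eqref{slprop}.

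For the final part of the statement I would use the comparison between $\Delta h_C$ and $h_\Gamma$ arising from the general hyperplane section. From the exact sequence $0\to\mathcal I_C(i-1)\to\mathcal I_C(i)\to\mathcal I_\Gamma(i)\to0$ (valid for a general hyperplane, $C$ being integral, $\mathcal I_\Gamma$ denoting the ideal sheaf of $\Gamma$ in $\Ps^{r-1}$), one obtains in the same way as in Remark \ref{ciro} that $\Delta h_C(i)-h_\Gamma(i)$ equals the dimension of the kernel of the multiplication map $H^1(\Ps^r,\mathcal I_C(i-1))\to H^1(\Ps^r,\mathcal I_C(i))$ by a general linear form; in particular $\Delta h_C(i)\geq h_\Gamma(i)$ for every $i$. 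Substituting $d-\Delta h_C(i)\leq d-h_\Gamma(i)$ into \eqref{slprop} yields \eqref{lprop}, and equality holds exactly when all those kernels vanish for $i\geq m+1$, that is, when multiplication by a general linear form is injective on $H^1(\Ps^r,\mathcal I_C(j))$ for every $j\geq m$. Since $H^1(\Ps^r,\mathcal I_C(j))=0$ for $j\gg0$, if some $H^1(\Ps^r,\mathcal I_C(j))$ with $j\geq m$ were nonzero, picking the largest such $j$ would contradict injectivity of the zero map into $H^1(\Ps^r,\mathcal I_C(j+1))=0$; hence equality in \eqref{lprop} is equivalent to $h^1(\Ps^r,\mathcal I_C(i))=0$ for all $i\geq m$.

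The step I expect to be the main obstacle is the last numerical estimate: checking that the single assumption $d\geq\frac{s^4}{2(r-2)}$ already forces $m\geq reg(S)-1$ for every admissible surface $S$ (this is exactly why the lower bound on $d$ can be made to depend on $r$ and $s$ alone, through the worst case of Lemma \ref{regularity}). The remaining arguments are routine manipulations with Hilbert functions, Hilbert polynomials, and the cohomology sequence of a general hyperplane section.
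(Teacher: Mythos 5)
Your proposal is correct and follows essentially the same route as the paper: the identity $p_a(C)=\sum_{i\geq 1}(d-\Delta h_C(i))$, the split at $i=m$, Bezout to get $h_C(m)=h_S(m)$, Lemma \ref{regularity} to identify $h_S(m)$ with $p_S(m)$, and the comparison $\Delta h_C(i)\geq h_\Gamma(i)$ together with the kernel description of Remark \ref{ciro} for the inequality and the equality criterion. Your explicit numerical check that $d\geq \frac{s^4}{2(r-2)}$ forces $m\geq reg(S)-1$ (reducing to $\frac{s^2}{2}-s+r\geq 3$) is a detail the paper leaves implicit, and it is verified correctly.
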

\medskip

\begin{proof} Since for $t\gg 0$
we have $h_C(t)=1-p_a(C)+dt$ then we may write
\begin{equation}\label{sslprop}
p_a(C)=dt+1-h_C(t)=\sum_{i=1}^{t}d-\Delta
h_{C}(i)=\sum_{i=1}^{+\infty}d-\Delta h_{C}(i)
\end{equation}
$$
=\sum_{i=1}^{m}d-\Delta h_{C}(i)+\sum_{i=m+1}^{+\infty}d-\Delta
h_{C}(i).
$$
On the other hand by Bezout's Theorem we have $h_C(i)=h_S(i)$ for
any $i\leq m$, and therefore we have
$$
\sum_{i=1}^{m}d-\Delta h_{C}(i)=md+1-h_C(m)=md+1-h_S(m).
$$
By Lemma \ref{regularity} we deduce that $h_S(m)$ coincides with
the Hilbert polynomial $p_S(m)$ of $S$ at level $m$, i.e.
$$
h_S(m)=p_S(m)={\binom{m+1}{2}}s+m(1-\pi)+1+p_a(S).
$$
It follows that
$$
\sum_{i=1}^{m}d-\Delta h_{C}(i)=md+1-h_S(m)
$$
$$
=md+1-\left[{\binom{m+1}{2}}s+m(1-\pi)+1+p_a(S)\right]={\binom{m}{2}}s+m(\epsilon+\pi)-p_a(S).
$$
Inserting this into (\ref{sslprop}) we obtain (\ref{slprop}).

As for (\ref{lprop}), we observe that
$$
\sum_{i=m+1}^{+\infty} d-\Delta
h_{C}(i)=\sum_{i=m+1}^{+\infty}d-h_{\Gamma}(i)
-\sum_{i=m+1}^{+\infty}\Delta h_C(i)-h_{\Gamma}(i).
$$
Hence (\ref{slprop}) implies (\ref{lprop}) because $\Delta
h_C(i)-h_{\Gamma}(i)\geq 0$ for any $i$ (\cite{EH}, Lemma (3.1)).
Moreover we deduce that $p_a(C)$ attains the bound appearing in
(\ref{lprop}) if and only if $\sum_{i=m+1}^{+\infty}\Delta
h_C(i)-h_{\Gamma}(i)=0$. And this is equivalent to say that
$h^1(\Ps^{r},\mathcal{I}_{C}(i))=0$ for any $i\geq m$ in view of
Remark \ref{ciro}. This concludes the proof of Proposition
\ref{prop}.
\end{proof}

\medskip
\begin{remark}\label{ms}
(i) From the proof it follows that {\it if there is an a.C.M.
curve on $S$ of degree $d\gg s$ then
$\sum_{i=1}^{+\infty}\mu_i=0$, and therefore the Hartshorne-Rao
module of $S$ vanishes}.

\smallskip
(ii) When $S$ is smooth one knows that $reg(S)\leq s-r+3$
\cite{Laz}, and so to prove Proposition \ref{prop} one may simply
assume that $m\geq s-r+2$, or also $d\geq s(s-r+3)$. This last
numerical assumption is enough also if one knows that
$h^1(\mathbb{P}^r,\mathcal I_S(m))=0$, e.g. when $S$ is a. C. M..
\end{remark}
\medskip

Combining (\ref{lprop}) with Lemma \ref{ons} we get the following

\medskip
\begin{corollary}
\label{essesmall} Let $S\subset \Ps^{r}$  be an irreducible,
reduced, non degenerate projective surface of degree $s$ with
$2\leq r-1\leq s\leq 2r-4$, sectional genus $\pi$ and arithmetic
genus $p_a(S)$. Let $C\subset S$ be an irreducible, reduced, non
degenerate projective curve of arithmetic genus $p_a(C)$ and
degree $d\geq \frac{s^4}{2(r-2)}$. Then one has:
\begin{equation}\label{lessesmall}
p_a(C)\leq G^*(r,d,s,\pi,p_a(S)).
\end{equation}
\end{corollary}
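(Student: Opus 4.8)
The plan is to combine directly the two results that immediately precede the statement. By Proposition \ref{prop}, since $d\geq \frac{s^4}{2(r-2)}$ and $s\geq r-1\geq 2$, we have the inequality
\begin{equation*}
p_a(C)\leq {\binom{m}{2}}s+m(\epsilon+\pi)-p_a(S)+
\sum_{i=m+1}^{+\infty}\bigl(d-h_{\Gamma}(i)\bigr).
\end{equation*}
So it suffices to estimate the infinite sum on the right. Here I would invoke Lemma \ref{ons}, which applies precisely under the hypotheses $r\geq 4$, $r-1\leq s\leq 2r-4$, and $d\geq s(s-1)$ (the latter being implied by $d\geq \frac{s^4}{2(r-2)}$ when $s\geq r-1$, since $\frac{s^4}{2(r-2)}\geq \frac{s^3}{2}\geq s(s-1)$ for $s\geq 2$): it gives $h_{\Gamma}(i)\geq h_{r,d,s,\pi}(i)$ for all $i\geq 1$, and in particular for all $i\geq m+1$.

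The next step is purely numerical. Using the definition of $h_{r,d,s,\pi}$ from Notations \ref{notazioni}(iii), for $i=m+1$ we have $d-h_{r,d,s,\pi}(m+1)=\max\bigl(0,\bigl[\tfrac{2\pi-(s-1-\epsilon)}{2}\bigr]\bigr)$, and $d-h_{r,d,s,\pi}(i)=0$ for $i\geq m+2$. Hence
\begin{equation*}
\sum_{i=m+1}^{+\infty}\bigl(d-h_{\Gamma}(i)\bigr)\leq
\sum_{i=m+1}^{+\infty}\bigl(d-h_{r,d,s,\pi}(i)\bigr)
=\max\left(0,\left[\frac{2\pi-(s-1-\epsilon)}{2}\right]\right).
\end{equation*}
Substituting this into the bound from Proposition \ref{prop} yields
\begin{equation*}
p_a(C)\leq {\binom{m}{2}}s+m(\epsilon+\pi)-p_a(S)
+\max\left(0,\left[\frac{2\pi-(s-1-\epsilon)}{2}\right]\right),
\end{equation*}
and the right-hand side is exactly $G^*(r,d,s,\pi,p_a(S))$ by the definition in Notations \ref{notazioni}(ii). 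This is inequality (\ref{lessesmall}), completing the proof.

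I do not expect any genuine obstacle here: the corollary is a bookkeeping combination of Proposition \ref{prop} and Lemma \ref{ons}, and the only points requiring a moment's care are checking that the numerical hypothesis $d\geq \frac{s^4}{2(r-2)}$ is strong enough to invoke both cited results (it is, comfortably), and verifying the tail-sum computation of $\sum_{i\geq m+1}(d-h_{r,d,s,\pi}(i))$ against the defining formula for $G^*$. Both are routine.
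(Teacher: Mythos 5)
Your proof is correct and is exactly the paper's argument: the paper introduces the corollary with the single line ``Combining (\ref{lprop}) with Lemma \ref{ons} we get the following,'' which is precisely your combination of Proposition \ref{prop} with Lemma \ref{ons} plus the routine tail-sum computation against the definition of $G^*$. Your verification that $d\geq \frac{s^4}{2(r-2)}$ implies $d\geq s(s-1)$ and the evaluation of $\sum_{i\geq m+1}(d-h_{r,d,s,\pi}(i))$ are both accurate.
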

\medskip

\bigskip
\section{Proof of Theorem \ref{first} and of Proposition \ref{second}}

We begin by proving Theorem \ref{first}.

\smallskip
(a) First assume $C$ is not contained in any surface of degree
$s$. Then $C$ is not contained in any surface of degree $<s+1$. By
\cite{EH} we know that $h_{\Gamma}(i)\geq h_{r,d,s+1,\pi'_0}(i)$
for any $i$, and by Lemma \ref{num} we deduce $h_{\Gamma}(i)\geq
h_{r,d,s,\pi}(i)$ for any $i$. Hence we may assume that $C$ is
contained in a surface  of degree $s$, with sectional genus
$\pi'\leq \pi$. By Lemma \ref{num} and by Lemma \ref{ons} we get
again $h_{\Gamma}(i)\geq h_{r,d,s,\pi}(i)$ for any $i$.

\smallskip
(b) Since in general we have $p_a(C)\leq
\sum_{i=1}^{+\infty}(d-h_{\Gamma}(i))$ (\cite{EH}, Corollary
(3.2)) then by (a) and Lemma \ref{num} we deduce
$$
p_a(C)\leq \sum_{i=1}^{+\infty}(d-h_{\Gamma}(i))\leq
\sum_{i=1}^{+\infty}(d-h_{r,d,s,\pi}(i))=G^*\left(r,d,s,\pi,-\binom{\pi_0-\pi+1}{2}\right).
$$

\smallskip
(c) If the bound is sharp, i.e. if $p_a(C)=
G^*\left(r,d,s,\pi,-\binom{\pi_0-\pi+1}{2}\right)$, then previous
inequality shows that $p_a(C)=
\sum_{i=1}^{+\infty}(d-h_{\Gamma}(i))$, i.e. $C$ is a.C.M., and
$h_{\Gamma}=h_{r,d,s,\pi}$. Moreover, the same argument developed
in (a) and (b), combined with Lemma \ref{num}, proves also that if
$C$ reaches the bound then $C$ must be contained in a surface $S$
of degree $s$ and sectional genus $\pi$. As for $p_a(S)$, observe
that, by Corollary \ref{essesmall}, we have
$$
p_a(C)=G^*\left(r,d,s,\pi,-\binom{\pi_0-\pi+1}{2}\right) \leq
G^*\left(r,d,s,\pi,p_a(S)\right).
$$
It follows  $p_a(S)\leq -\binom{\pi_0-\pi+1}{2}$, and by Corollary
\ref{arithmeticgenus2} we get $p_a(S)= -\binom{\pi_0-\pi+1}{2}$.

Now, to conclude the proof of Theorem \ref{first}, we only have to
prove that the upper bound is sharp.

To this purpose, fix integers $r\geq 4$, $r-1\leq s\leq 2r-4$,
$0\leq \pi <\pi_0:=s-r+1$. Let $\Sigma'\subset
\mathbb{P}^{r-1+\pi_0-\pi}$ be a smooth Castelnuovo curve of
degree $s$ and genus $\pi$ (which we may find on a smooth rational
normal scroll surface in $\mathbb{P}^{r-1+\pi_0-\pi}$ (use
\cite{Hartshorne}, Corollary 2.18 and 2.19)).

Choose general $\pi_0-\pi+2$ points on $\Sigma'$ (compare with
\cite{Park}, p. 13, Example 3.7). Denote by
$\mathbb{P}^{\pi_0-\pi+1}$ the linear space generated by these
points. A general subspace $\mathbb{P}^{\pi_0-\pi-1}\subset
\mathbb{P}^{\pi_0-\pi+1}$ defines a projection
$\varphi:\mathbb{P}^{r-1+\pi_0-\pi}\backslash
\mathbb{P}^{\pi_0-\pi-1}\to \mathbb{P}^{r}$ which maps
isomorphically $\Sigma'$ to a curve $\Sigma\subset
\mathbb{P}^{r-1}$. Since
$\varphi(\mathbb{P}^{\pi_0-\pi+1}\backslash
\mathbb{P}^{\pi_0-\pi-1})$ is a $(\pi_0-\pi+2)-$secant line to
$\Sigma$ then Castelnuovo-Mumford regularity of $\Sigma$ is at
least $\pi_0-\pi+2$. By Lemma \ref{gl} it follows that
$h^1(\mathbb{P}^{r-1},\mathcal I_{\Sigma}(i))=
\max\left(0,\,\pi_0-\pi+1-i\right)$ for any $i\geq 1$ and so
$h_{\Sigma}(i)=1-\pi+si-\max\left(0,\,\pi_0-\pi+1-i\right)$ for
any $i\geq 1$. In particular, once fixed an integer $d\gg s$, we
have $h_{\Sigma}(i)=h_{r,d,s,\pi}(i)$ for $1\leq i\leq m$ (with
$d-1=ms+\epsilon$, $0\leq \epsilon\leq s-1$).

Denote by  $S\subset\mathbb{P}^{r}$  the projective cone on
$\Sigma$. Fix an integer $k\gg s$ of type $k-1=\mu s+\epsilon$,
$0\leq \epsilon\leq s-1$, and a set $D$ of $s-1-\epsilon$ distinct
points on $\Sigma$. Let $C(D)\subset S$ be the cone over $D$, and
let $F\subset \mathbb{P}^r$ be a hypersurface of degree $\mu+1$
containing $C(D)$,  consisting of $\mu+1$ sufficiently general
hyperplanes. Let $R$ be the residual curve to $C(D)$ in the
complete intersection of $F$ with $S$. Equipped with the reduced
structure, $R$ is a cone over $k$ distinct points of $\Sigma$. In
particular $R$ is a (reducible) a.C.M. curve of degree $k$ on $S$,
and, if we denote by $R'$ the general hyperplane section of $R$,
we have $p_a(R)=\sum_{i=1}^{+\infty}(k-h_{R'}(i))$. We make the
following claim. We will prove it in a while.

\medskip
{\bf{Claim.}} {\it For a suitable $D$ one has
$h_{R'}(i)=h_{r,k,s,\pi}(i)$ for any $i\geq 1$.}
\medskip

It follows that
$$
p_a(R)=\sum_{i=1}^{+\infty}(k-h_{R'}(i))=\sum_{i=1}^{+\infty}(k-h_{r,k,s,\pi}(i))=
G^*\left(r,k,s,\pi,-\binom{\pi_0-\pi+1}{2}\right).
$$

Now let $d\gg k$, with $d-1=ms+\epsilon$. Let $G\subset
\mathbb{P}^r$ be a   hypersurface of degree $m+1$ containing
$C(D)$ such that the residual curve $C$ in the complete
intersection of $G$ with $S$, equipped with the reduced structure,
is an integral curve of degree $d$, with a singular point of
multiplicity $k$ at the vertex $p$ of $S$, and   tangent cone at
$p$ equal to $R$. We are going to prove that $C$ is the curve we
are looking for, i.e.
$$p_a(C)=G^*\left(r,d,s,\pi,-\binom{\pi_0-\pi+1}{2}\right).$$
To this aim, let $\widetilde{S}$ be the blowing-up of $S$ at the
vertex. By \cite{Hartshorne}, p. 374, we know that $\widetilde{S}$
is the ruled surface $\mathbb{P}(\mathcal O_{\Sigma}\oplus\mathcal
O_{\Sigma}(-1))\to \Sigma$. Denote by $E$ the exceptional divisor,
by $f$ the line of the ruling, and by $L$ the pull-back of the
hyperplane section. We have $L^2=s$, $L\cdot f=1$, $f^2=0$,
$L\equiv E+sf$ and $K_{\widetilde{S}}\equiv -2L+(2\pi-2+s)f$. Let
$\widetilde{C}\subset \widetilde{S}$ be the blowing-up of $C$ at
$p$, which is nothing but the normalization of $C$. Since $C$ has
degree $d$ then $\widetilde{C}$ belongs to the numerical class of
$(m+1+a)L+(1+\epsilon-(a+1)s)f$ for some integer $a$. Moreover
$E\cdot \widetilde{C}=1+\epsilon-(a+1)s=k$, so
$$
a=-\frac{k+s-1-\epsilon}{s}=-(\mu+1).
$$
By the adjunction formula we get
$$
p_a(\widetilde{C})=\binom{m}{2}s+m(\epsilon+\pi)+\pi-\frac{1}{2}a^2s+a(\pi+\epsilon-\frac{1}{2}s).
$$
On the other hand we have
$$
p_a(C)=p_a(\widetilde{C})+\delta_p
$$
where $\delta_p$ is the delta invariant of the singularity
$(C,p)$. Since the tangent cone of $C$ at $p$ is $R$ then the
delta invariant  is equal to the difference between the arithmetic
genus of $R$ and the arithmetic genus of $k$ disjoint lines in the
projective space, i.e.
$$
\delta_p=p_a(R)-(1-k)=G^*\left(r,k,s,\pi,-\binom{\pi_0-\pi+1}{2}\right)-(1-k).
$$
It follows that
$$
p_a(C)=\binom{m}{2}s+m(\epsilon+\pi)+\pi-\frac{1}{2}a^2s+a(\pi+\epsilon-\frac{1}{2}s)
$$
$$
+ G^*\left(r,k,s,\pi,-\binom{\pi_0-\pi+1}{2}\right)-(1-k).
$$
Taking into account that $a=-\frac{k+s-1-\epsilon}{s}$, a direct
computation proves that this number is exactly
$G^*\left(r,d,s,\pi,-\binom{\pi_0-\pi+1}{2}\right)$.

It remains to prove the claim, i.e. that {\it for a suitable $D$
one has $h_{R'}(i)=h_{r,k,s,\pi}(i)$ for any $i\geq 1$}. This
certainly holds true for any $D$ and  any $1\leq i\leq \mu$
because in this range we have by construction
$h_{\Sigma}(i)=h_{r,d,s,\pi}(i)$, and $h_{R'}(i)=h_{\Sigma}(i)$ by
Bezout Theorem. This holds true also in the range $i\geq \mu+2$ by
degree reasons (compare with the proof of Lemma \ref{ons}). It
remains to examine the case $i=\mu+1$. If
$\max\left(0,\left[\frac{2\pi-(s-1-\epsilon)}{2}\right]\right)=0$
then, as before,  again by degree reasons we have
$h_{R'}(\mu+1)=h_{r,k,s,\pi}(\mu+1)=k$. Otherwise
$\max\left(0,\left[\frac{2\pi-(s-1-\epsilon)}{2}\right]\right)>0$.
In this case let $S'\subset\mathbb{P}^{r+\pi_0-\pi}$ be the cone
over $\Sigma'$. By \cite{CCD}, Example 6.5 (here we need to choose
$\Sigma'$ on a smooth rational normal scroll surface), we know
that for a suitable set $D'$ (in \cite{CCD} denoted by $Z'$) of
$s-1-\epsilon$ distinct points of $\Sigma'$, a general curve $C'$,
obtained from the cone over $D'$ through a linkage with $S'$ and a
hypersurface of degree $\mu+1$, is an integral curve   of degree
$k$ and maximal arithmetic genus
$p_a(C')=G(r+\pi_0-\pi,k,s)=G^*(r+\pi_0-\pi,k,s,\pi,0)$. Let
$\Gamma'$ and $H'$ be the general hyperplane sections of $C'$ and
$\Sigma'$. We have $\mathcal O_{\Sigma'}(D')\cong
\mathcal{O}_{\Sigma'}(-\Gamma'+(\mu+1)H')$. Since $C'$ is maximal
then by a similar computation as in (\ref{ex}) we see that
$h^0(\Sigma',
\mathcal{O}_{\Sigma'}(D'))=s-\epsilon-\pi+d-h_{r+\pi_0-\pi,k,s,\pi}(\mu+1)$.
Since $h_{r+\pi_0-\pi,k,s,\pi}(\mu+1)=h_{r,k,s,\pi}(\mu+1)$ then
$h^0(\Sigma',
\mathcal{O}_{\Sigma'}(D'))=s-\epsilon-\pi+d-h_{r,k,s,\pi}(\mu+1)$.
Therefore if we choose $D$ as the  divisor on $\Sigma$
corresponding to $D'$ via the isomorphism $\Sigma'\cong\Sigma$, as
in (\ref{ex}) we have
$$
h_{R'}(\mu+1)=h^0(\Sigma, \mathcal{O}_{\Sigma}(\mu+1))-h^0(\Sigma,
\mathcal{O}_{\Sigma}(-R'+(\mu+1)H))
$$
$$
= 1-\pi+(\mu+1)s-h^0(\Sigma, \mathcal{O}_{\Sigma}(D)) =
1-\pi+(\mu+1)s-h^0(\Sigma',
\mathcal{O}_{\Sigma}(D'))=h_{r,k,s,\pi}(\mu+1).
$$

This concludes the proof of Theorem \ref{first}.

\medskip
\begin{remark}\label{scroll}
Constructing extremal curves as above, we need to choose $\Sigma'$
on a smooth rational normal scroll surface only in the case
$\max\left(0,\left[\frac{2\pi-(s-1-\epsilon)}{2}\right]\right)>0$,
i.e. when $2\pi\geq s-\epsilon+1$.
\end{remark}
\medskip

Next we turn to the proof of Proposition \ref{second}.

\smallskip
(a)  First assume $C$ is not contained in any surface of degree
$s$. Then $C$ is not contained in any surface of degree $<s+1$. By
\cite{EH} we know that $p_a(C)\leq
G(r,d,s+1)=\frac{d^2}{2(s+1)}+O(d)$ which is strictly less than
$G^*(r,d,s,\pi,p)$ because $d\gg s$ and
$G^*(r,d,s,\pi,p)=\frac{d^2}{2s}+O(d)$. Hence we may assume that
$C$ is contained in a surface  of degree $s$, with sectional genus
$\pi'\leq \pi$. If $\pi'<\pi$ then by Theorem \ref{first} we know
that $p_a(C)\leq G^*(r,d,s,\pi',-\binom{\pi_0-\pi'+1}{2})$ which
is strictly less than $G^*(r,d,s,\pi,p)$ because $\pi'<\pi$ and
$d\gg s$. Therefore we may assume that $C$ is contained in a
surface $S$ of degree $s$, with sectional genus $\pi$, and
arithmetic genus $p_a(S)\geq p$. Then by Corollary \ref{essesmall}
we know $p_a(C)\leq G^*(r,d,s,\pi,p_a(S))$ which is $\leq
G^*(r,d,s,\pi,p)$ because $p_a(S)\geq p$. This establishes the
upper bound.

\smallskip
(b) Previous argument also shows that if $p_a(C)$ reaches the
upper bound then $C$ is contained in a surface of degree $s$,
sectional genus $\pi$, and arithmetic genus $p_a(S)\geq p$. Since
$G^*(r,d,s,\pi,p_a(S)) = G^*(r,d,s,\pi,p)$ then $p_a(S)\leq p$,
hence $p_a(S)=p$.

\smallskip
(c) Taking into account Remark \ref{stima} (i) below, one may
construct a.C.M. extremal curves on the cone over $\Sigma$ exactly
as in the proof of Theorem \ref{first}. We omit the details.

\smallskip
(d) The bound is sharp when $p=-\binom{\pi_0-\pi+1}{2}$ by Theorem
\ref{first}. Next let $\Sigma'\subset \mathbb{P}^{r-1+\pi_o-\pi}$
be a smooth Castelnuovo curve of degree $r-1\leq s\leq 2r-4$. By
\cite{AR}, Theorem 2.6, p. 8, we know that a general projection
$\Sigma\subset \mathbb{P}^r$ of $\Sigma'$ remains $2$-normal. By
Proposition \ref{gl} it follows that $\Sigma$ is $k$-normal for
any $k\geq 2$. Therefore $\dim_{\mathbb
C}M(\Sigma)=h^1(\mathbb{P}^{r-1},\mathcal{I}_{\Sigma}(1))=\pi_0-\pi$.
By property (c) this proves the sharpness of the bound  in the
case $p=-(\pi_0-\pi)$. As for the case $p=0$, let $S'\subset
\mathbb{P}^{r+\pi_0-\pi}$ be a cone over a Castelnuovo curve of
degree $r-1\leq s\leq 2r-4$ as in \cite{CCD}, Example 6.4 and 6.5,
and let $C'\subset S'$ be an extremal curve with arithmetic genus
$p_a(C')=G(r+\pi_0-\pi,d,s)$. Projecting isomorphically in
$\mathbb{P}^{r}$ we get extremal curves with genus
$G^*(r,d,s,\pi,0)=G(r+\pi_0-\pi,d,s)$. Therefore the bound
$G^*(r,d,s,\pi,p)$ is sharp also when $p=0$.

This concludes the proof of Proposition \ref{second}.

\bigskip
\begin{remark}\label{stima}

\smallskip (i) Let $S\subset\mathbb{P}^r$ be an integral nondegenerate surface
of degree $r-1\leq s \leq 2r-4$, with general hyperplane section
$\Sigma$ of arithmetic genus $\pi$. Fix an integer $d\gg s$ and
consider the following numerical function
$$
h_{d,\Sigma}(i):=
\begin{cases}
1-\pi+is-h^1(\mathbb{P}^{r-1},\mathcal I_{\Sigma}(i))\quad
{\text{if $1\leq i\leq m$}}\\
d-\max\left(0,\left[\frac{2\pi-(s-1-\epsilon)}{2}\right]\right)\quad
{\text{if $i=m+1$}}\\ d\quad {\text{if $i\geq m+2$}}.
\end{cases}
$$
Observe that $h_{d,\Sigma}(i)=h_{\Sigma}(i)$ for $1\leq i\leq m$.
Using the same argument as in Lemma \ref{ons} we see that for any
curve $C\subset S$ of degree $d$ one has $h_{\Gamma}(i)\geq
h_{d,\Sigma}(i)$ for any $i$, and so
\begin{equation}\label{nbound}
p_a(C)\leq \sum_{i=1}^{+\infty}(d-h_{\Gamma}(i))\leq
\sum_{i=1}^{+\infty}(d-h_{d,\Sigma}(i))=G^*(r,d,s,\pi,-\dim_{\mathbb{C}}
M(\Sigma))
\end{equation}
where $M(\Sigma)$ denotes the Hartshorne-Rao module of $\Sigma$.
This is another "natural" upper bound for $p_a(C)$. However notice
that by Lemma \ref{arithmeticgenus} we know that
$p_a(S)=-\dim_{\mathbb{C}} M(\Sigma)+\sum_{i=1}^{+\infty}\mu_i$,
and therefore the bound appearing in Corollary \ref{essesmall} is
more fine than this new bound (\ref{nbound}), i.e.
$$
G^*(r,d,s,\pi,p_a(S))\leq G^*(r,d,s,\pi,-\dim_{\mathbb{C}}
M(\Sigma)).
$$
The inequality can be strict. For example, this is the case for a
non linearly normal smooth surface $S$ of arithmetic genus
$p_a(S)=0$. In fact for such a surface we have $M(S)\neq 0$, and
therefore $\sum_{i=1}^{+\infty}\mu_i>0$.

\smallskip
(ii) Combining the examples in \cite{Park}, p. 14, Table 1, with
Proposition \ref{second}, (c), one may construct other examples of
extremal curves with genus $G^*(r,d,s,\pi,p)$.

\smallskip (iii)  Let $X$ be a ruled surface over a smooth curve $R$ of genus
$\pi$, defined by the normalized bundle $\mathcal E=\mathcal
O_{R}\oplus \mathcal O_{R}(-\mathfrak e)$, where $\mathfrak e$ is
a fixed divisor on $R$ of degree $-e\leq -2$ \cite{Hartshorne}.
Let $\mathfrak n$ be a divisor on $R$ of degree $n\geq 2\pi+1$. By
(\cite{Hartshorne}, Ex. 2.11, pg. 385), we know that $\Sigma:=
R_0+\mathfrak n f$ is very ample on $X$ (here $R_0$ denotes a
section of $X$ with $\mathcal O_{X}(R_0)\cong\mathcal
O_{\Ps(\mathcal E)}(1)$, and $f$ a fibre of the ruling $X\to R$).
As in the proof of (\cite{Hartshorne}, Theorem 2.17, pg. 379), we
see that the complete linear system $|\,\Sigma\,|$ embeds $X$ in
$\Ps^{r+1}$ as a linearly normal surface $S$ of degree $s$,
sectional genus $\pi$ and arithmetic genus
$p_a(S)=-\pi=-(\pi_0(s,r)-\pi)$, with $s=2n-e$ and $r+1=s+1-2\pi$.
In particular $r\leq s\leq 2r-4$. Now let $C$ be any curve on $S$
of degree $d$. For a suitable integer $a$ and divisor ${\mathfrak
b}_a$ on $R$ of degree $b_a=1+\epsilon-(a+1)s$ we have $C\in
|\,(m+a+1)\Sigma+\mathfrak b_af\,|$. Taking into account that the
canonical divisor class of $S$ is
$|\,K_S\,|=|\,-2\Sigma+(s+2\pi-2)f\,|$, by the adjunction formula
we may compute the arithmetic genus of $C$, which is equal to
$$
g(a):={\binom{m}{2}}s+m(\epsilon+\pi)+\pi-\frac{s}{2}a^2+a(\pi+\epsilon-\frac{s}{2}).
$$
Taking $a=0$,  we deduce that, in the case $2\pi\leq
s+1-\epsilon$, there are smooth curves $C$ on $S$ with maximal
genus $g(0)=G^*(r+1,d,s,\pi,p)$, with $p=-\pi=-(\pi_0(s,r)-\pi)$.
Projecting isomorphically $S$ in $\mathbb{P}^r$, these examples
show the existence of smooth extremal curves with genus
$G^*(r,d,s,\pi,-(\pi_0(s,r-1)-\pi)+1)$ which are not a.C.M.. By
contrast notice that in this range (i.e. $p=-(\pi_0-\pi)+1$)
Proposition \ref{second}, (c), combined with the examples in
\cite{Park}, p. 14, Table 1, proves also the existence  of a.C.M.
extremal curves. So in certain range one can find both a.C.M. and
not a.C.M. extremal curves. Therefore the classification of
extremal curves appears somewhat complicated. Projecting in lower
dimensional subspaces, this argument works well also for other
values of $p\geq -(\pi_0-\pi)$.

\smallskip (iv)  In the case $p=0$, any
extremal curve $C$ cannot be a.C.M.. In fact if $C$ would a.C.M.
then the surface $S$ (of degree $s$, sectional genus $\pi$ and
arithmetic genus $p_a(S)=0$) on which it lies should be a.C.M. in
view of Remark \ref{ms}. This is impossible when $\pi<\pi_0$.

\smallskip
(v) Let $C\subset \mathbb{P}^{r}$ be an extremal curve in the case
$p=-(\pi_0-\pi)$, contained in a cone over a curve $\Sigma\subset
\mathbb{P}^{r-1}$ with $\dim_{\mathbb{C}}M(\Sigma)=\pi_0-\pi$.
Then we have $h_{\Gamma}(2)=h_{\Sigma}(2)=1-\pi+2s$. On the other
hand, the Hilbert function at level $2$ of the general hyperplane
section of an extremal curve  with genus $G(r,d,s+1)$ is equal to
$h_{r,d,s+1,\pi'_0}(2)=s+r+3$, which is strictly less than
$h_{\Gamma}(2)$ as soon as $\pi_0-\pi>3$. Therefore we see that
(at least in this case) there is no a minimal Hilbert function for
the general hyperplane section of a curve satisfying the
conditions in Proposition \ref{second}.

\smallskip (vi) If $S$ is smooth then $p_a(S)\geq -\pi$ and so inequality
(\ref{lessesmall}) implies  $p_a(C)\leq G(r,d,s,\pi,-\pi)$.

\smallskip (vii) From the proof of Corollary \ref{essesmall} we see  that the bound
$$
p_a(C)\leq {\binom{m}{2}}s+m(\epsilon+\pi)-p_a(S)
$$
holds true for any $s$ and $d\gg s$, if $2\pi\leq s+1-\epsilon$.
So when $\pi=0$ then we have the bound
$$
p_a(C)\leq {\binom{m}{2}}s+m\epsilon-p_a(S).
$$
In certain cases it is sharp. In fact, let $S\subset \Ps^4$ be a
general projection of a smooth rational normal scroll
$S'\subset\Ps^{s+1}$, and let $\delta_S$ be the number of double
points of $S$. From the double point formula we know that
$\delta_S={\binom{s-2}{2}}$. On the other hand we have
$p_a(S)=-\delta_S$. So previous bound becomes
$$
p_a(C)\leq {\binom{m}{2}}s+m\epsilon+{\binom{s-2}{2}}.
$$
Now take a Castelnuovo's curve $C'\subset S'$ of degree $d\gg s$
passing through the double point set of $S'$. Then the projection
$C$ of $C'$ acquires $\delta_S$ nodes and so
$$
p_a(C)=p_a(C')+\delta_S=
{\binom{m}{2}}s+m\epsilon+{\binom{s-2}{2}}.
$$

\smallskip (viii) The arithmetic genus of a curve $C$ complete
intersection of a surface $S$  with a hypersurface of degree $m+1$
is $p_a(C)={\binom{m}{2}}s+m(\epsilon+\pi)+\pi$, where $s$ and
$\pi$ are the degree and sectional genus of $S$. On the other
hand, in this range, i.e. when $\epsilon=s-1$, we have
$G^*(r,d,s,\pi,p)={\binom{m}{2}}s+m(\epsilon+\pi)-p+\pi$, which is
strictly greater  than $p_a(C)$ when $p<0$. In other words, in
contrast with the classical case, in our setting complete
intersections are not extremal curves.

\smallskip (ix) Let $C$ be an extremal curve as in Theorem \ref{first},
and assume $\epsilon =s-1$. Let $S$ be the surface of degree $s$,
sectional genus $\pi$ and arithmetic genus
$p_a(S)=-{\binom{\pi_0-\pi+1}{2}}$ on which $C$ lies. We remark
that $S$ cannot be locally Cohen-Macaulay.  In fact, by the proof
of Lemma \ref{ons} we see that since $C$ is extremal then $\Gamma$
is the complete intersection of $\Sigma$ with a hypersurface of
degree $m+1$. Since $C$ is a. C. M. one may lift such a
hypersurface to a hypersuface $F\subset \Ps^{r}$ of degree $m+1$
containing $C$ and not containing $S$. If $S$ would be locally
Cohen-Macaulay then $C$, as a scheme, would be the complete
intersection of $S$ with $F$ for degree reasons. This is absurd in
view of previous remark (viii).
\end{remark}

{\bf{Aknowledgements}}

We would like to thank Ciro Ciliberto for valuable discussions and
suggestions on the subject of this paper.

\end{document}